\theoremstyle{plain}
\newtheorem{theorem}{Theorem}
\newtheorem*{theorem*}{Theorem}
\newtheorem{lemma}[theorem]{Lemma}
\newtheorem{proposition}[theorem]{Proposition}
\theoremstyle{definition}
\theoremstyle{remark}
\author{Louigi Addario-Berry\thanks{Department of Mathematics and Statistics, McGill University, Montr\'eal, Canada. \texttt{louigi.addario@mcgill.ca}. Partially supported by an NSERC Discovery Grant and Discovery Accelerator Supplement and an FQRNT Team Research Grant. 
} \and Pascal Maillard\thanks{Laboratoire de Math\'ematiques d'Orsay, Univ.~Paris--Sud, CNRS, Universit\'e Paris--Saclay, 91405 Orsay Cedex, France. \texttt{pascal.maillard@u-psud.fr}. Partially supported by ANR Liouville (ANR-15-CE40-0013), ANR GRAAL (ANR-14-CE25-0014) and a CRM Simons Research Fellowship.}}
\title{The algorithmic hardness threshold for continuous \\ random energy models} 
\date{October 24, 2018}
\def\N{\mathbb N}
\def\R{\mathbb R}
\def\P{\mathbb P}
\def\E{\mathbb E}
\def\F{\mathscr F}
\DeclareMathOperator{\Var}{Var}
\def\Ind{\boldsymbol 1}
\def\eps{\varepsilon}
\DeclareMathOperator*{\argmax}{argmax}
\def\T{\mathbb T}
\newcommand\rX{\mathrm{X}}
\numberwithin{equation}{section}
\numberwithin{theorem}{section}
\begin{document}
\maketitle


\begin{abstract}
We prove an algorithmic hardness result for finding low-energy states in the so-called \emph{continuous random energy model (CREM)}, introduced by Bovier and Kurkova in 2004 as an extension of Derrida's \emph{generalized random energy model}. The CREM is a model of a random energy landscape $(X_v)_{v \in \{0,1\}^N}$ on the discrete hypercube with built-in hierarchical structure, and can be regarded as a toy model for strongly correlated random energy landscapes such as the family of $p$-spin models including the Sherrington--Kirkpatrick model. The CREM is parameterized by an increasing function $A:[0,1]\to[0,1]$, which encodes the correlations between states. 

We exhibit an \emph{algorithmic hardness threshold} $x_*$, which is explicit in terms of $A$. More precisely, we obtain two results: First, we show that a renormalization procedure combined with a greedy search yields for any $\varepsilon > 0$ a linear-time algorithm which finds states $v \in \{0,1\}^N$ with $X_v \ge (x_*-\varepsilon) N$. Second, we show that the value $x_*$ is essentially best-possible: for any $\varepsilon > 0$, any algorithm which finds states $v$ with $X_v \ge (x_*+\varepsilon)N$ requires exponentially many queries in expectation and with high probability. We further discuss what insights this study yields for understanding algorithmic hardness thresholds for random instances of combinatorial optimization problems.

\paragraph{Key words:} algorithmic hardness, algorithmic lower bound, spin glass, random energy model, Gaussian process, combinatorial optimization

\paragraph{MSC 2010 classification:} 68Q17, 82D30, 60K35, 60J80
\end{abstract}


\section{Introduction}
Write $\T_N$ for the rooted binary tree of depth $N$. Nodes at depth $i$ are indexed by strings $v_1v_2\ldots v_i \in V_i:=\{0,1\}^i$. We use $\emptyset$ to denote the root. For $v,w \in \T_N$ we denote by $v\wedge w$ the most recent common ancestor of $v$ and $w$; if $v=w$ then $v \wedge w=v$. We also write $v\le w$ and $v<w$ to mean that $v$ is an ancestor of $w$, including or not including $w$. The generation of node $v$ is denoted $|v|$. 

Let $A$ be the distribution function of an arbitrary probability distribution on $[0,1]$, and define a  centred Gaussian process $\mathrm{X}=(X_v)_{v \in \T_N}$ by 
\[
\E[X_v X_{w}] = N A(R_N(v,w)),
\]
where 
\[
 R_N(v,w) = \frac 1 N \max\{i: v_j = w_j\ \forall j\le i\} = \frac{|v \wedge w|}{N}. 
\]
One may view $\mathrm{X}$ as a time-inhomogeneous binary branching random walk in which the offspring of generation-$i$ particles have independent centred Gaussian increments with variance 
\[
N(A((i+1)/N) -A(i/N)).
\]
In particular, $\mathrm{X}$ has the following \emph{branching property}: For every vertex $v\in \T_N$, the family $(X_w - X_v)_{w \ge v}$ is independent of the family $(X_w)_{w\not > v}$

The {\em continuous random energy model} (or {\em CREM}) with parameters $A$ and $N$ is the Gaussian process $(X_v)_{v \in V_N}$ obtained from $\mathrm{X}$ by only considering generation-$N$ nodes. The CREM is an extension of Derrida's \emph{generalized random energy model} \cite{Derrida1985} was introduced in \cite{Bovier2004a} as an analytically tractable model of mean-field spin glasses. That work described the limiting free energy, ground state, and overlap distribution of Gibbs measures on the CREM. The contribution of this work is to explicitly describe the algorithmic threshold for finding low-energy states in the CREM (equivalently, finding nodes $v \in V_N$ for which $X_v$ is large). 

We use the following natural computational model of (randomized) algorithms. We first extend our probability space by adding a sequence $U_1,U_2,\ldots$ of iid Uniform$[0,1]$ random variables, independent of the CREM. Given a sequence $\mathrm{v}=(v(n))_{n\ge1}$ of random vertices of $\T_N$, we define a filtration $\F = (\F_n)_{n\ge0}$ where $\F_n$ contains ``all information about everything we have queried so far, as well as the additional randomness needed to choose the next vertex'', i.e.
\[
\F_n = \sigma\left(v(1),\ldots,v(n);\,X_{v(1)},\ldots,X_{v(n)};\,U_1,\ldots,U_{n+1}\right).
\]
We say the sequence $\mathrm{v}$ is a \emph{randomized search algorithm on $\mathbb{T}_N$} (algorithm for short) if it is previsible with respect to the filtration $\F=\F(\mathrm{v})$, i.e., if $v(n+1)$ is $\F_n$-measurable for every $n\ge0$.

\begin{theorem}
\label{th:complexity_CREM}
Suppose that there exists a Riemann-integrable function $a:[0,1]\to\R_+$, such that $A(t) = \int_0^t a(s)\,ds$ for all $t\in[0,1]$. 
Then, letting 
\[
x_*=x_*(A)=\sqrt{2\log 2}\int_0^1 \sqrt{a(t)}dt\,,
\] 
the following holds. 
\begin{enumerate}
 \item For all $x < x_*$, there is a linear-time algorithm that finds a node $v\in V_N$ with $X_v \ge x N$ with high probability. 
 \item For all $x > x_*$, there exists $\gamma=\gamma(A,x)>0$ such that for $N$ sufficiently large, for any algorithm, the number of queries performed before finding a node $v\in V_N$ with $X_v \ge x N$ is stochastically bounded from below by a geometric random variable with parameter $\exp(-\gamma N)$.
\end{enumerate}
\end{theorem}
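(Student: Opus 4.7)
\textbf{Part (1): Achievability via renormalisation.} Fix $\varepsilon > 0$ and choose an integer block size $k = k(\varepsilon)$, to be determined; partition $\{0,\ldots,N\}$ into $L := \lfloor N/k \rfloor$ consecutive blocks of size $k$, with boundary depths $t_i := ik/N$. Define a block-greedy algorithm: set $u_0 := \emptyset$ and, given $u_{i-1}$ at depth $(i-1)k$, query all $2^k$ descendants of $u_{i-1}$ at depth $ik$ and set $u_i$ to be the descendant with the largest $X$-value. By the branching property, conditional on $u_{i-1}$, the block gain $X_{u_i} - X_{u_{i-1}}$ is the maximum of a centred Gaussian BRW of depth $k$ whose step variances sum to $N(A(t_i) - A(t_{i-1}))$. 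Applying the classical asymptotic $\E[\max_{|w|=k}Z_w] = (1 - o_k(1)) k\sigma\sqrt{2\log 2}$ for a depth-$k$ BRW with per-step standard deviation $\sigma$, followed by a Riemann-sum approximation using the Riemann integrability of $a$, gives $\E[X_{u_L}]/N \to x_*$ as $k$ and $N/k$ tend to infinity. Each block gain has $O(1)$ fluctuations on the rescaled scale, so $\Var(X_{u_L}) = O(N)$, and Chebyshev's inequality yields $X_{u_L} \ge (x_* - \varepsilon)N$ with high probability. The total query count is $L \cdot 2^k = O_\varepsilon(N)$, linear in $N$.

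\textbf{Part (2): Hardness above $x_*$.} The target is to show that for some $\gamma = \gamma(A,x) > 0$, any algorithm satisfies $\P(\exists i \le n : v(i) \in V_N, X_{v(i)} \ge xN) \le n e^{-\gamma N}$, which is equivalent to the stated geometric stochastic lower bound. The plan is a level-by-level induction: discretise $[0,1]$ into $L$ points $0 = t_0 < \cdots < t_L = 1$ and define an increasing profile $c : \{t_0,\ldots,t_L\} \to \R$ with $c(t_0) = 0$, $c(t_L) = x$, chosen to lie strictly above the greedy curve $t \mapsto \sqrt{2\log 2}\int_0^t \sqrt{a(s)}\,ds$. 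Let $G_j$ be the event that some queried vertex at depth $t_j N$ has $X$-value exceeding $c(t_j) N$. For the transition: on $G_{j-1}^c$, every queried parent $u$ at depth $t_{j-1}N$ has $X_u \le c(t_{j-1})N$, and a Gaussian union bound over the at most $n_u$ descendants of $u$ explored at depth $t_j N$ gives $\P(\max_v (X_v - X_u) \ge (c(t_j) - c(t_{j-1}))N \mid X_u) \le n_u \exp\bigl(-N(c(t_j)-c(t_{j-1}))^2/(2(A(t_j)-A(t_{j-1})))\bigr)$; summing over parents using $\sum_u n_u \le n$ yields $\P(G_j \mid G_{j-1}^c) \le n\exp(\cdots)$. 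Iterating and optimising the profile $c$ via a Cauchy--Schwarz argument (the optimal increments are proportional to $\sqrt{\Delta A_j}$, which connects to $\int\sqrt{a}$), the feasibility of $c(t_L) = x > x_*$ produces a positive $\gamma$, as desired.

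\textbf{Main obstacle.} The principal difficulty in Part (2) is the adaptivity of the algorithm. A direct union bound over the $|V_N| = 2^N$ leaves of $\mathbb{T}_N$ yields an exponent bounded by the REM-type threshold $\sqrt{2A(1)\log 2}$, which by Cauchy--Schwarz strictly exceeds $x_*$ whenever $a$ is not constant; the interesting regime $x \in (x_*, \sqrt{2A(1)\log 2}]$ is thus \emph{not} handled by a naive first-moment argument. The profile-based inductive argument above resolves this by controlling the maximum $X$-value at each intermediate depth, distributing the exploration budget across parents level by level so that the per-level bound depends only on $\sum_u n_u \le n$ rather than on the full branching factor. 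Making the induction uniform over adaptive strategies, and choosing the discretisation $L$ so that the Riemann-approximation error does not absorb the gap $x - x_*$ while the per-level bounds remain useful, is the main technical work. The resulting $\gamma$ is positive for all $x > x_*$, matching the threshold identified in Part (1).
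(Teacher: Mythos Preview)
Your Part~(1) is essentially the paper's argument: a block-greedy search with fixed block size $M$, expectation controlled via comparison to a homogeneous BRW plus the Hammersley--Kingman--Biggins asymptotic, a Riemann-sum approximation of $\int_0^1\sqrt{a}$, and a variance bound (the paper uses the Gaussian Poincar\'e inequality) followed by Chebyshev. One slip: the within-block BRW is \emph{inhomogeneous}, so you cannot apply the asymptotic with a single $\sigma$; the paper fixes this via Sudakov--Fernique, comparing to the homogeneous BRW with step variance $\min_{s\in[t_{i-1},t_i]}a(s)$, and this is exactly where Riemann integrability of $\sqrt{a}$ is used.

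Part~(2), however, has a real gap. Your inductive bound
\[
\P\big(G_j\mid G_{j-1}^c\big)\ \le\ n\,\exp\!\Big(-\frac{N(c(t_j)-c(t_{j-1}))^2}{2(A(t_j)-A(t_{j-1}))}\Big)
\]
cannot hold as stated: nothing in it sees the branching entropy between levels $t_{j-1}N$ and $t_jN$, so the same chain of inequalities would give $\P(\tau_x\le n)\le Ln\,e^{-\gamma N}$ for \emph{every} $x>0$ (take, say, $c(t)=xt$), contradicting Part~(1). The error is in ``a Gaussian union bound over the at most $n_u$ descendants of $u$ explored'': those descendants are chosen \emph{adaptively}, using earlier observations that are correlated with the very increments $X_v-X_u$ you are trying to bound, so the unconditional Gaussian tail does not apply to them. (Separately, the depth-$t_{j-1}N$ ancestors of vertices queried at depth $t_jN$ need not themselves be queried, so $G_{j-1}^c$ does not control their values.)

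The paper replaces your profile induction by two ingredients. First, a purely deterministic pigeonhole: if $X_v\ge xN$ with $x>x_*$, then for some block $k$ the increment $X_{v[\lfloor Nk/K\rfloor]}-X_{v[\lfloor N(k-1)/K\rfloor]}$ exceeds the ``steep'' threshold $N\sqrt{(1+\varepsilon)(2\log 2)\,\Delta A_k/K}$; this works because the sum of these thresholds tends to $\sqrt{1+\varepsilon}\,x_*<x$. Second, to handle adaptivity, at each query $v(n)$ the paper \emph{reveals the entire chain of spindles} $\mathcal C_{v(n)}$ (all vertices whose most recent common ancestor with $v(n)$ lies in the current block). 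By the branching property the newly revealed portion is, up to a shift, a fresh copy independent of the past, so the conditional probability of uncovering a steep vertex at step $n$ is at most $\sup_w\P(\exists\,v\in\mathcal C_w\text{ steep})$. This last probability is then bounded by a union bound over \emph{all} $\approx 2^{N/K}$ vertices in each spindle, not just the queried ones; it is precisely this entropy factor, balanced against the Gaussian tail at the steep threshold, that makes the exponent positive only when $x>x_*$. Your Cauchy--Schwarz optimisation over profiles is trying to manufacture the threshold $x_*$ without this entropy term, which is why it cannot close.
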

Note in particular that the Riemann-integrability assumption in the statement of the theorem implies that the function $a$, defined therein, is bounded, and that the function $t\mapsto \sqrt{a(t)}$ is also Riemann-integrable; both facts are used below. 

The first assertion of the theorem should be interpreted as follows: for all $x < x_*$, there is $C=C(x)$ such that for all $\eps > 0$, for all sufficiently large $N$ there is an algorithm $\mathrm{v}$ and a stopping time $\tau$ for the filtration $\F(\mathrm{v})$ with 
\[
\P(\tau \le C N,v(\tau) \in V_N)=1\quad\mbox{and with}\quad \P(X_{v(\tau)} \ge xN) > 1-\eps\,.
\]
It is impossible to strengthen this to have $\P(X_{v(\tau)} \ge xN)=1$, since $\P(\sup_{v \in V_N} X_v < xN)>0$. The second assertion states that for any algorithm $\mathrm{v}$, the stopping time 
\[
\tau_x = \tau_x(\mathrm{v},N) = \inf\{n\in\N: v(n)\in V_N\text{ and } X_{v(n)} \ge xN\},
\] 
stochastically dominates a Geometric$(\exp(-\gamma N))$ random variable. 

\subsection{Discussion - how to understand $x_*$.}\label{sec:howtounderstand}

For the remainder of the paper we fix functions $A$ and $a$ satisfying the conditions of Theorem~\ref{th:complexity_CREM}. Denote by $\hat A$ the concave hull of $A$ and by $\hat{a}$ its left-derivative. Then the (negative) ground state energy satisfies 
\begin{equation}\label{eq:xsdef}
 \frac 1 N \sup_{v\in V_N} X_v  \stackrel{\mathrm{a.s.}}{\longrightarrow} \sqrt{2\log 2}\int_0^1 \sqrt{\hat{a}(s)}\,ds \eqqcolon x_s.
\end{equation}
This is proved in \cite[Theorem~3.1]{Bovier2004a} -- the authors assume in that paper that $A$ is continuously differentiable, but inspection of the proof shows that this is not needed for that theorem. 

The quantity $x_s$ admits a representation in terms of a variational problem which we now describe (this can be deduced from results of Mallein \cite{Mallein2015}; see Appendix~\ref{sec:app} for details). 
For $b:[0,1] \to \R$ measurable and $t \in [0,1]$, let 
\[
E(b,t) = -(\log 2) t + \int_0^t \frac{b(s)^2}{2a(s)}\,ds,
\]
where we set $0/0=0$. Then define 
\[
 \mathcal Z = \{z:[0,1] \to \mathbb{R}\mbox{ absolutely continuous}:z'(0) = 0\mbox{ and } \forall t\in[0,1], E(z',t) \le 0\}.
\]
Heuristically, $\mathcal Z$ is the closure of the set of all ``energetically admissible'' macroscopic particle trajectories along branches of $\mathbb{T}_N$: if $z$ has $E(z',t) < 0$ for all $t \in (0,1]$ then for all $M \le N$, the expected number of particles $v \in V_M$ with 
\[
(N^{-1}X_{v(\lfloor t N \rfloor)},0 \le t \le M/N) \approx (z_t,0 \le t \le M/N) 
\]
is exponentially large in $N$. This criterion is in fact sufficient for the existence with high probability of exponentially many particles approximately following the trajectory $z$ until the terminal time $N$ (see \cite[Section 1.2]{Mallein2015}). It is clear from this description that the ground state energy satisfies 
\begin{equation}
\label{eq:xm_var_problem}
x_s =  \sup\{z(1): z \in \mathcal{Z}\}.
\end{equation}
The supremum in (\ref{eq:xm_var_problem}) is achieved by the function $z:[0,1] \to [0,\infty)$ given by 
\begin{equation}
\label{eq:z}
z(t) = \int_0^t a(s) \cdot \Big(\frac{2\log 2}{\hat{a}(s)}\Big)^{1/2}\mathrm{d}s\, ;
\end{equation}
see Proposition~\ref{prop:r_max} for a proof that $z \in \mathcal{Z}$ and that $x_s=z(1) = (2\log 2)^{1/2} \int_0^1 \hat{a}(s)^{1/2}$. 

Note that from the definition of $x_s$ in (\ref{eq:xsdef}), by the Cauchy--Schwarz inequality,
\[
 \frac{x_s}{\sqrt{2\log 2}} \le \left(\int_0^1 \hat{a}(s)\right)^{1/2} = (\hat A(1) - \hat A(0))^{1/2} = 1.
\]
In the above, equality holds if and only if $\hat{a}$ is the constant function. This is equivalent to the condition that $\hat A(t) = t$, which in turn is equivalent to the condition that $A(t) \le t$ for all $t\in[0,1]$. This is called the \emph{weak correlation regime} in \cite{Bovier2004a}, and includes the usual branching random walk, for which $A(t) = t$ for all $t\in[0,1]$. 

Finally, we define a special trajectory $z_*$ by
\begin{equation}
\label{eq:zstar}
 z_*(t) = \sqrt{2\log 2} \int_0^t \sqrt{a(s)}\,ds\, ;
\end{equation}
this is called the {\em natural speed path} in \cite{Mallein2015}. 
Note that $z_*$ satisfies $E(z_*,t) = 0$ for all $t\in[0,1]$ and so $z_*\in \mathcal Z$. 
We then have 
\[
x_* := z_*(1) =  \sqrt{2\log 2} \int_0^1 \sqrt{a(s)}\,ds, 
\]
and when $x < x_*$ and $x_*-x$ is small, the linear time algorithms promised in Theorem~\ref{th:complexity_CREM} in fact find paths in $\mathbb{T}_N$ from the root to $V_N$ which approximately follow the curve $z_*$. Conversely, if $x>x_*$, then any path leading to $x$ has to have a steeper ascent than the natural speed path at some time. The fact that this is exponentially unlikely for any fixed path eventually leads to the algorithmic hardness result.

Finally, comparing $z$ and $z_*$ from equations \eqref{eq:z} and \eqref{eq:zstar} and using the Cauchy--Schwarz inequality, it is also not hard to see that $x_* \le x_s$, with equality if and only if $A$ is concave. {\em The concavity of $A$ is therefore a necessary and sufficient condition for the existence of efficient algorithms for finding a node whose energy is within a $(1-o(1))$ factor of the ground state energy.}

\subsection{Discussion - phase transitions, algorithms, and spin glasses}\label{sec:discussion}

This work is motivated by the large (and growing) body of work on random energy landscapes, and the link between the structure of such landscapes and algorithmic barriers. They arise in areas such as \emph{combinatorial optimization}, \emph{spin glasses} and \emph{statistical inference}. In this section, we briefly put our work into the context of the first two areas. 
For the relation to statistical inference problems, we refer to the recent surveys \cite{MR3699594,Zdeborova2016}.

In a \emph{combinatorial optimization problem}, one is given a combinatorial structure and one seeks to find an element from that structure which is in some sense large or maximal. A prime example is the \emph{independent set problem}, in which one is interested in finding an independent set of a given density $\alpha$ in a given input graph. Such problems being typically NP-complete and therefore (conjecturally) hard to solve for arbitrary inputs, in the last decades there has been a lot of interest in the efficiency of algorithms on \emph{random} instances of such problems. A major thread within this research attempts to link the geometry of the space of solutions of random instances with algorithmic barriers. In particular, several \emph{thresholds} or \emph{phase transitions} in the geometry of the solution space have been proposed, together with interpretations in terms of algorithmic efficiency \cite{Krzakala2007}. We focus here on the so-called \emph{condensation threshold} $\alpha_c$. 


In the aforementioned independent set problem, one can for example consider a random $d$-regular graph on $N$ vertices for fixed $d$ and large $N$, and ask whether one can, with high probability, efficiently find an independent set of size at least $\alpha N$, for some $\alpha \in(0,1)$.
In this context, roughly speaking, for $\alpha$ just above the condensation threshold $\alpha_c$ the uniform measure on independent sets of density $\alpha$ is expected to ``condense'' onto an $O(1)$ number of ``clusters'' of comparable size. In contrast, for $\alpha$ below this threshold, such a measure is either comprised of an exponential in $N$ number of clusters or of a single big cluster. 

To this date, the meaning of the condensation threshold in terms of algorithmic efficiency seems to be unclear. It was conjectured \cite{Krzakala2007} that it is the threshold up to which algorithms inspired by a certain iterative procedure called \emph{belief propagation} should be effective, but recent rigorous results \cite{Coja-Oghlan2011,Coja-Oghlan2017} suggest that this might not be the case. In what follows, and on the basis of Theorem~\ref{th:complexity_CREM}, we will indeed argue that \emph{one should not expect any relation between the condensation threshold and algorithmic efficiency.}


The insight on phase transitions in combinatorial optimization problems draws a lot on the understanding of disordered systems in physics, in particular of \emph{(infinite-range) spin glasses}. A prime example is the \emph{Ising $p$-spin model} (with zero magnetization), $p\ge 2$, which is the random probability distribution on the hypercube $\{-1,1\}^N$ defined by \cite{Derrida1981}
\[
\mu_\beta(\sigma) \propto \exp\left(\frac{\beta}{N^{(p-1)/2}} \sum_{i_1,\ldots,i_p=1}^N J_{i_1,\ldots,i_p}\sigma(i_1)\cdots \sigma(i_p)\right),
\]
where $J_{i_1,\ldots,i_p}$ are iid standard Gaussian random variables and $\beta\ge0$ is a parameter called the \emph{inverse temperature}. The term inside the exponential (without the factor $\beta$), is also called the \emph{negative energy} of the configuration $\sigma$. With $p=2$, this model is also called the \emph{Sherrington--Kirkpatrick (SK) model}, and corresponds to the combinatorial {\em max-cut} problem with Gaussian weights. A continuous version, where the hypercube is replaced by the $(N-1)$-dimensional sphere of radius $\sqrt N$, is called the \emph{spherical $p$-spin model}. These models have been studied in depth since the 70's and a deep understanding of several aspects has been obtained by both rigorous \cite{PanchenkoBook,TalagrandI,TalagrandII} and non-rigorous \cite{MePaVi} methods. In particular, it is now well understood that spin glasses undergo several phase transitions in the parameter $\beta$. Of importance to us is the \emph{critical inverse temperature} $\beta_c$, which marks the point where the model goes from the so-called \emph{paramagnetic} ($\beta<\beta_c$) to the \emph{spin glass} phase ($\beta > \beta_c$).\footnote{Mathematically, one can understand this transition as follows: when sampling two independent configurations $\sigma,\sigma'$ from the (random) measure $\mu_\beta$, then their renormalized scalar product $\frac 1 N \sigma\cdot \sigma'$ converges in law to a constant in the paramagnetic phase, whereas it converges to a non-degenerate random variable in the spin glass phase. This definition also holds for the CREM considered in this paper, with the scalar product $\sigma\cdot\sigma'$ replaced by $|v\wedge w|$.} We don't explain here the precise physical meaning of this sentence --- the interested reader can find this for example in \cite{MePaVi,SteinNewmanBook}. The relevance for us is that $\beta_c$ corresponds to the condensation threshold $\alpha_c$ defined above \cite{Krzakala2007}.

In spin glass models, the parameter $\beta$ is in correspondence with a \emph{negative energy $x = x(\beta)$}, in the sense that a sample from the measure $\mu_\beta$ has with high probability energy approximately $-xN$. As a consequence, the threshold $\beta_c$ corresponds to a threshold $x_c$ in the negative energy. In the CREM considered here, the values of both thresholds are in fact the same \cite[Theorem 3.3]{Bovier2004a}:
\[
x_c = \beta_c = \frac{\sqrt{2\log 2}}{\lim_{t\to 0} \sqrt{\hat a(t)}} = \frac{\sqrt{2\log 2}}{\sup_{t\in(0,1]} \sqrt{\hat a(t)}}
\]

The following fact is a consequence of Theorem~\ref{th:complexity_CREM} and deserves, in our opinion, particular attention: \emph{both $x_*<x_c$ and $x_* > x_c$ are possible.} Here are examples which illustrate this fact. First, suppose $A(x) < x$ for all $x\in(0,1)$. Then $x_c = x_s = \sqrt{2\log 2}$, but since $A\ne \hat A$, we have $x_* < x_s = x_c$. Second, suppose that $A$ is strictly concave. Then $x_c < x_s$, since
\[
x_c = \sqrt{2\log 2} \int_0^1\frac{\hat a(s)}{\sup_{t\in(0,1]} \sqrt{\hat a(t)}}\,ds < \sqrt{2\log 2} \int_0^1 \sqrt{\hat a(s)}\,ds = x_s.
\]
But since $A = \hat A$, we have $x_* = x_s > x_c$. Of course, one can cook up examples which sit in between these two extremes.

To conclude, we have compared our algorithmic threshold $x_*$ from Theorem~\ref{th:complexity_CREM} to the threshold $x_c$ from the spin glass literature corresponding to the critical inverse temperature $\beta_c$ and have seen that $x_*$ does not have any relation to $x_c$ in general. 
We believe this insight is also valid for combinatorial optimization problems. To be precise, for a given combinatorial optimization problem, let $\alpha^*$ be the threshold above which it becomes impossible for \emph{any} algorithm to find solutions in polynomial time with high probability. Then our results suggest that both $\alpha^*<\alpha_c$ and $\alpha^* > \alpha_c$ are possible, with $\alpha_c$ the condensation threshold.

The inequality $\alpha^* > \alpha_c$ appears to hold for the SK model/Max-cut with Gaussian weights. In fact in this setting simulations suggest that sequential improvement algorithms find configurations lying well beyond the condensation threshold \cite{Eastham2006}. 
Relatedly, for Max-cut on random regular graphs, Montanari \cite{Montanari2016} has conjectured that local algorithms find asymptotically optimal configurations, which would imply that $\alpha^*=\alpha_s > \alpha_c$ for this model. (This contrasts with the situation for a generalization of Max-cut to random hypergraphs, where local algorithms are provably suboptimal in some cases \cite{Chen2017b}.) On the other hand, we are not aware of an example where it is known or concretely conjectured that $\alpha^*<\alpha_c$, but our results suggest that this situation should be the rule rather than the exception for models exhibiting what in spin-glass language is called {\em one-step replica symmetry breaking} at zero temperature (which for the CREM is the case exactly in the weak correlation regime $A(x)\le x$ for all $x\in[0,1]$).

We briefly mention another threshold that has been proposed to be of importance for combinatorial optimization problems: the \emph{clustering} or \emph{shattering threshold} $\alpha_d \le \alpha_c$ at which the uniform measure on the space of solutions ``shatters into clusters'' \cite{Achlioptas2006,Krzakala2007}. Related phenomena happen at the point of \emph{dynamical arrest} or \emph{dynamical glass transition} $\beta_d\le \beta_c$ in the spin glass and glass literature \cite{Montanari2004}. In that context, the threshold $\beta_d$ corresponds to the point at which a natural spin-flip dynamics with stationary measure $\mu_\beta$, also known as \emph{Glauber dynamics}, undergoes a transition from fast ($\beta<\beta_d$) to slow ($\beta>\beta_d$) mixing. Unfortunately, we are not aware of a natural Glauber dynamics for the CREM, and could not find a natural definition for a threshold $x_d$ corresponding to $\beta_d$ in our setting. However, it is reasonable to conjecture for general spin glass models the existence of a threshold $\beta_G$, such that one can efficiently approximate the Gibbs measure $\mu_\beta$ if $\beta < \beta_G$ and cannot if $\beta > \beta_G$. Of course, we have $\beta_G \ge \beta_d$, because the Glauber dynamics efficiently approximate the Gibbs measure if $\beta < \beta_d$. Moreover, this threshold, as well as its obvious counterpart $x_G$, is well-defined for the CREM. Note that $x_G\le x_*$, because sampling a typical vertex of negative energy $x_GN$ is clearly harder than finding \emph{some} such vertex. In Section~\ref{sec:open_questions}, we give a precise definition of $x_G$ and conjecture its value for the CREM. In particular, we find that one may again have $x_G > x_c$ or $x_G < x_c$. 

In the future, we hope that one can use the insights from the current article to make more quantitative predictions for algorithmic hardness thresholds in other models, or even to devise new algorithms with provable efficiency. In a fairly broad range of spin glasses, for $N$ large, independent samples from $\mu_\beta$ (with the same underlying random environment $J$) are expected to approximately satisfy {\em ultrametricity}, meaning that they are in a certain sense organized according to an underlying tree structure. (A precise definition of approximate ultrametricity would lead us too far afield; a nice formulation can be found in \cite{MR3628881}.) It is very natural to ask for which models it is the case that, if one can discover such a tree structure, one can exploit it to describe the algorithmic threshold for finding low energy states. In the CREM, a tree structure is built into the model, which makes it in some sense a natural proving ground for this question. 

We finally remark that our results are in line with the conjectured qualitative behavior of  algorithmic hardness thresholds for the $p$-spin glass models. Namely, for spherical models there are indications \cite{Auffinger2013a} that this threshold differs from the ground state when $p\ge 3$, whereas it is trivially equal to the ground state for $p=2$; see Open Question~\ref{item:spherical_threshold} in Section~\ref{sec:open_questions} for further details. One might expect the same for the Ising $p$-spin glass models; when $p=2$ this is related to the aforementioned conjecture of Montanari regarding local algorithms for Max-cut. Now, following Derrida~\cite{Derrida1985}, one can model the Ising $p$-spin glass by a CREM with parameter $A(t) = (I^{-1}((\log 2) t))^p$, where $I(x) = [(1+x) \log(1+x) + (1-x)\log (1-x)]/2$ for $x\in[0,1]$ and $I^{-1}$ is its inverse. (Bovier and Kurkova \cite{Bovier2004a} provide a discussion of which features of Ising spin glasses are captured by such a CREM.) One easily checks that this function is concave, and so $x_* = x_s$ by Theorem~\ref{th:complexity_CREM}, if and only if $p\le 2$, supporting the above picture.

\medskip
\noindent{\bf Overview.}
The remainder of the work is structured as follows. In Section~\ref{sec:existence} we prove the first part of Theorem~\ref{th:complexity_CREM}, showing the existence of linear time algorithms when $x < x_*$. In Section~\ref{sec:hardness}, we prove the second part of Theorem~\ref{th:complexity_CREM}, showing exponential hardness when $x > x_*$. 
Section~\ref{sec:weakermodel} sketches how to modify the algorithm if one only has access to the values $(X_v,v \in V_N)$ and not to values at internal nodes of $\mathbb{T}_N$; the cost is a linear-time slowdown, resulting in an algorithm whose runtime is $O(N^2)$ in probability. Section~\ref{sec:open_questions} contains open questions and potential avenues for continuing the line of research initiated here. 
Finally, Appendix~\ref{sec:app} puts the discussion in Section~\ref{sec:howtounderstand} on rigorous footing.

\medskip
\noindent{\bf Acknowledgements.}
We thank Gerard Ben Arous, Alexander Fribergh, Andrea Montanari and Jean-Christophe Mourrat for stimulating and enlightening discussions regarding spin glasses and their algorithmic complexity. We also warmly thank Bastien Mallein for suggesting formula~\eqref{eq:bastien}.

\section{Existence of a linear-time algorithm for $x<x_*$} \label{sec:existence}

Fix $M\in\N$ large. We define the following algorithm.
\begin{itemize}
\item Let $v(M,0)$ be the root of $\T_N$. 
\item For $0 \le n < \lceil N/M \rceil$, inductively define 
\[
v(M,n+1) = \argmax_{\substack{v\ge v(M,n)\\|v| = (Mn+M) \wedge N}} X_v.
\]
\item Set $v_{\mathrm{out}}(M) = v(M,\lceil N/M \rceil)$. 
\end{itemize}

This algorithm has running time bounded by $1+2^M \lceil N/M\rceil$, and so is a linear-time algorithm for fixed $M$. It remains to show that we can choose $M$ such that with high probability, $X_{v_{\mathrm{out}}(M)} \ge x\cdot N$. This will follow from the following lemma:

\begin{lemma}
\label{lem:exp_var}
For every $x' < x_*$, there exists $M=M(x')\in \N$ such that for all $N$ sufficiently large, $\E[X_{v_{\mathrm{out}}(M)}] \ge x'\cdot N$ and $\Var(X_{v_{\mathrm{out}}(M)}) \le N$.
\end{lemma}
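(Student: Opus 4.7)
The plan is to decompose $X_{v_{\mathrm{out}}(M)}$ into block increments, use the branching property to treat each block as an independent mini-CREM, bound the mean and variance of each increment, and sum. Concretely, write $X_{v_{\mathrm{out}}(M)} = \sum_{n=0}^{K-1}\Delta_n$ with $K=\lceil N/M\rceil$, $M_n = M\wedge(N-Mn)$, and $\Delta_n = X_{v(M,n+1)}-X_{v(M,n)}$. The branching property implies that, conditionally on everything revealed during blocks $0,\ldots,n-1$, the increment $\Delta_n$ is distributed as the maximum of an independent CREM on $\T_{M_n}$ with variance function $B_n(u) = (N/M_n)[A((Mn+uM_n)/N) - A(Mn/N)]$. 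In particular $\E[\Delta_n]$ and $\Var(\Delta_n)$ are deterministic, and the tower property gives $\mathrm{Cov}(\Delta_n,\Delta_m)=0$ for $n\ne m$.

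For the variance, I would note that $\max_w(X_w - X_{v(M,n)})$, as a function of the iid standard-normal coordinates that generate the mini-CREM, is Lipschitz with constant $\sqrt{\max_w \Var(X_w - X_{v(M,n)})}$. Gaussian Poincar\'e (equivalently Borell--TIS) then yields $\Var(\Delta_n) \le \max_w \Var(X_w - X_{v(M,n)}) = M_n B_n(1) = N[A((Mn+M_n)/N) - A(Mn/N)]$. Summing telescopes to $\Var(X_{v_{\mathrm{out}}(M)}) = \sum_n \Var(\Delta_n) \le N(A(1)-A(0)) \le N$, which is exactly the stated bound.

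For the mean, I would invoke the Bovier--Kurkova ground-state asymptotic for the mini-CREM, giving $\E[\Delta_n]/M_n \to x_s(B_n) = \sqrt{2\log 2}\int_0^1\sqrt{\hat b_n(u)}\,du$ as $M_n\to\infty$, where $\hat b_n$ is the left-derivative of the concave hull of $B_n$. The Cauchy--Schwarz argument from Section~\ref{sec:howtounderstand} gives $\int_0^1\sqrt{\hat b_n}\,du \ge \int_0^1\sqrt{b_n}\,du$, and the change of variables $t = Mn/N + uM_n/N$ converts $M_n$ times this lower bound into $N\sqrt{2\log 2}\int_{Mn/N}^{(Mn+M_n)/N}\sqrt{a(t)}\,dt$. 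Summing over $n$ telescopes to $N\sqrt{2\log 2}\int_0^1\sqrt{a(t)}\,dt = Nx_*$. Up to a uniform error $\eta(M)\to 0$, this yields $\E[X_{v_{\mathrm{out}}(M)}] \ge Nx_* - \eta(M)N - O(M)$, and the lemma follows by choosing $M$ large enough that $\eta(M) < x_*-x'$.

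The key obstacle is the uniformity of the Bovier--Kurkova asymptotic: the convergence $\E[\Delta_n]/M_n \to x_s(B_n)$ is established for a single fixed variance function, whereas here $B_n$ varies with both $n$ and $N$. Since $B_n(0)=0$ and $B_n'(u) = a(Mn/N + uM_n/N)$ is bounded by $\|a\|_\infty$, the family $\{B_n\}$ lies in a compact set of continuous functions, and one expects a compactness argument or a direct reinspection of the first- and second-moment estimates underlying Bovier--Kurkova to furnish a uniform rate $\eta(M)$. A cleaner alternative that sidesteps $x_s(B_n)$ altogether is to use Mallein's many-to-one and second-moment techniques \cite{Mallein2015} to exhibit, with high probability inside each mini-CREM, a descendant tracking the block's natural speed path, yielding the required lower bound on $\E[\Delta_n]$ directly via $z_*$ rather than through the ground-state $z$.
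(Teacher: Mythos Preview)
Your variance argument is essentially the paper's: Gaussian Poincar\'e on each block, then telescope. Fine.

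For the expectation, however, you take a different route from the paper, and the gap you identify is real and the paper's fix is worth knowing. You want to invoke the Bovier--Kurkova ground-state asymptotic for each mini-CREM and then pass through $x_s(B_n)\ge x_*(B_n)$. As you note, this gives $\E[\Delta_n]/M_n \ge (1-\eta(M))\,x_*(B_n)$ only if the convergence in the Bovier--Kurkova theorem is uniform over the family $\{B_n\}_{n,N}$, and that theorem as stated gives no such rate. Your compactness suggestion is plausible but would require reopening the first/second-moment estimates, and your alternative via Mallein's path-counting is heavier than necessary.

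The paper avoids the uniformity problem entirely with one comparison. Instead of appealing to the ground state of the inhomogeneous mini-CREM, it lower-bounds each block by a \emph{homogeneous} BRW via Sudakov--Fernique: since
\[
\E[(X_v-X_w)^2]\;\ge\;2\,\alpha(n,M/N)\,(M-|v\wedge w|),\qquad \alpha(n,\delta)=\min_{s\in[(n-1)\delta,n\delta]}a(s),
\]
one gets $\E[\Delta_n]\ge \sqrt{\alpha(n,M/N)}\;\E\bigl[\max_{|v|=M}X'_v\bigr]$, where $X'$ is the standard-Gaussian BRW of depth $M$. The reference process $X'$ is the \emph{same} for every $n$ and every $N$, so the Hammersley--Kingman--Biggins limit $\E[\max_{|v|=M}X'_v]/M\to\sqrt{2\log 2}$ supplies a single choice of $M$ that works uniformly. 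The cost is that you pick up $\sqrt{\alpha(n,M/N)}$ (the block minimum of $\sqrt{a}$) rather than the block average, but Riemann integrability of $\sqrt{a}$ makes $\frac{M}{N}\sum_n\sqrt{\alpha(n,M/N)}\to\int_0^1\sqrt{a}$, which is exactly $x_*/\sqrt{2\log 2}$.

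So your decomposition and variance bound match the paper, but your expectation step should be replaced by the Sudakov--Fernique reduction to a single homogeneous BRW; this is what buys uniformity without any extra work.
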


We first show how the first part of Theorem~\ref{th:complexity_CREM} follows from the above.

\begin{proof}[Proof of first part of Theorem~\ref{th:complexity_CREM}]
Fix $x' \in (x,x_*)$ and let $M=M(x')$ be as in Lemma~\ref{lem:exp_var}. Then 
for sufficiently large $N$, we have $\E[X_{v_{\mathrm{out}}(M)}] \ge x'\cdot N$ and $\Var(X_{v_{\mathrm{out}}(M)}) \le N$. Chebychev's inequality now gives that $\E[X_{v_{\mathrm{out}}(M)}] \ge xN$ with high probability as $N\to\infty$. Since $M$ is fixed, as mentioned above, the algorithm for finding $v_{\mathrm{out}}(M)$ has running time linear in $N$; the first part of the theorem follows.
\end{proof}

We now prove the missing piece.

\begin{proof}[Proof of Lemma~\ref{lem:exp_var}]
Write $Y_n = X_{v(n)}$. By the branching property, $(Y_n - Y_{n-1})_{n\ge1}$ is an independent sequence of random variables and so their variances (and of course their expectations) sum up. The variance bound is easiest: By a standard application of the Poincar\'e inequality for Gaussian measures (see e.g. Exercise 3.24 of Boucheron--Lugosi--Massart), we have for every $M$,
\begin{align*}
\Var(Y_n - Y_{n-1}) &= \Var(Y_n - Y_{n-1}\,|\,X_{v(n-1)},\,v(n-1))\qquad \text{(by the branching property)}\\
&\le \max_{\substack{v\ge v(n-1)\\ |v| = (|v(n-1)| + M) \wedge N}} \Var(X_v - X_{v(n-1)}\,|\,X_{v(n-1)},\,v(n-1))\\
& = (A(nM/N\wedge 1) - A((n-1)M/N\wedge 1))\cdot N.
\end{align*}
Summing over $n$, this gives for every $M$,
\[
\Var(X_{v_{\mathrm{out}}}) \le (A(1) - A(0))\cdot N = N,
\]
which proves the variance bound. 

The proof of the expectation estimate is a little more involved. Define for $\delta >0$ and $1\le n\le \lceil 1/\delta\rceil$,
\[
\alpha(n,\delta)
= \min_{s\in [\delta (n-1),\delta n]} a(s),
\]
where we take $a(s) = 0$ for $s> 1$. By the Riemann-integrability of $\sqrt a$, we have
\begin{equation}
\label{eq:adelta_limit}
\lim_{\delta \to 0} \delta \sum_{n=1}^{\lfloor 1/\delta\rfloor} \sqrt{\alpha(n,\delta)} = \int_0^1 \sqrt {a(t)}\,dt.
\end{equation}
We claim that for every $\eps>0$, we can choose $M$ such that for large enough $N$, for all $1\le n \le N/M$,
\begin{equation}
\label{eq:Y}
\E[Y_n - Y_{n-1}] \ge (1-\eps) \sqrt{2\log 2} \sqrt{\alpha(n,M/N)}\times M.
\end{equation}
We briefly postpone the proof of this bound. Next, it is a classical consequence of the Sudakov-Fernique inequality that
\begin{equation}
\label{eq:last_piece}
\E[Y_{\lceil N/M\rceil} - Y_{\lceil N/M\rceil-1}] \ge 0.
\end{equation}
It follows from \eqref{eq:adelta_limit}, \eqref{eq:Y} and \eqref{eq:last_piece} that for large $N$,
\begin{align*}
\E[X_{v_{\mathrm{out}}}] &= \sum_{n=1}^{\lfloor N/M\rfloor + 1} \E[Y_n - Y_{n-1}] \\
&\ge (1-2\eps) \sqrt{2\log 2} \left(\int_0^1 \sqrt {a(t)}\,dt\right) N\\
&= (1-2\eps) x_* N.
\end{align*}
This proves the expectation bound from the statement of the lemma.

It remains to prove \eqref{eq:Y}. For simplicity of notation, we only do so for $n=1$; the general case is no different. Fix $v,w\in V_M$; then 
\begin{align*}
\E[(X_v - X_w)^2] 
&= 2(A(M/N) - A(|v\wedge w|/N))N \\
&\ge 2 \alpha(1,M/N) (M-|v\wedge w|)\\
&= \alpha(1,M/N) E[(X'_v - X'_w)^2],
\end{align*}
where $X'$ is the binary branching random walk whose offspring displacement is standard Gaussian. 
By the Sudakov-Fernique inequality, we get
\[
\E[Y_1] = \E\left[\max_{|v|=M} X_v\right] \ge \sqrt{\alpha(1,M/N)} \ \E\left[\max_{|v|=M} X'_v\right].
\]
Also, by the Hammersly-Kingman-Biggins theorem \cite{MR0420890,MR0370721,MR0400438}, the branching random walk $X'$ satisfies
\[
 \frac{\E\left[\max_{|v|=M} X'_v\right]}{M} \to \sqrt{2\log 2},\quad M\to\infty.
\]
Choosing $M$ large enough so that the term on the left-hand side of the last display is greater than $(1-\eps)\sqrt{2\log 2}$, this gives \eqref{eq:Y} (for $n=1$) and concludes the lemma.
\end{proof}

\section{Exponential hardness for $x>x_*$}\label{sec:hardness}

In this section, we prove the second part of Theorem~\ref{th:complexity_CREM}. 
The first observation is a completely deterministic fact on the particle trajectories. For $v\in \T_N$ and $0 \le m \le |v|$, let $v[m] = v_1\ldots v_{m}$ be the generation-$m$ ancestor of $v$. For $\eps > 0$ and $K \in \N$, say that $v$ is {\em $(\eps,K,\mathrm{X})$-steep} if there exists $k \in \{1,\ldots,K\}$ such that $|v|=\lfloor Nk/K \rfloor$ and such that 
\begin{equation}
\label{eq:subproblem}
 X_{v} - X_{v[\lfloor N(k-1)/K\rfloor]} \ge N\cdot \sqrt{(1 + \eps)\frac{(2\log 2) (A(k/K)-A((k-1)/K))}{K}}. 
\end{equation}
\begin{lemma}
\label{lem:deterministic}
 For all $x > x_*$, there exist $K\in\N$ and $\eps > 0$ such that 
 for all $N$ sufficiently large, 
all vertices $v\in V_N$ with $X_v \ge xN$ have an $(\eps,K,\mathrm{X})-steep$ ancestor. 
\end{lemma}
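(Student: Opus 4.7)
The plan is to prove the contrapositive. Fix $K \in \N$ (to be chosen) and assume $N \ge K$, so that the depths $\lfloor Nk/K \rfloor$ for $k = 0, 1, \ldots, K$ are strictly increasing, starting at $0$ and ending at $N$. Given $v \in V_N$, suppose that for every $k \in \{1, \ldots, K\}$ the ancestor $v[\lfloor Nk/K \rfloor]$ fails to be $(\eps, K, \mathrm{X})$-steep (noting that for $k = K$ this ``ancestor'' is $v$ itself). Since $A(0) = 0$ gives $X_\emptyset = 0$, a telescoping argument together with the negation of \eqref{eq:subproblem} applied at each ancestor yields
\[
X_v = \sum_{k=1}^K \bigl(X_{v[\lfloor Nk/K \rfloor]} - X_{v[\lfloor N(k-1)/K \rfloor]}\bigr) < N\sqrt{(1+\eps)(2\log 2)}\, S_K,
\]
where $S_K := \sum_{k=1}^K \sqrt{(A(k/K) - A((k-1)/K))/K}$. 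Thus the contrapositive will follow once $\eps$ and $K$ are chosen to make the right-hand side at most $xN$.

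The main analytic ingredient is the limit $\lim_{K\to\infty} S_K = x_*/\sqrt{2\log 2}$. To see this, write $\bar a_k := K(A(k/K) - A((k-1)/K))$ for the mean of $a$ on $I_k := [(k-1)/K, k/K]$, so that $S_K = K^{-1} \sum_{k=1}^K \sqrt{\bar a_k}$. Since $\inf_{I_k} a \le \bar a_k \le \sup_{I_k} a$ and the square root is monotone, $S_K$ is sandwiched between the lower and upper Darboux sums of $\sqrt a$ on the uniform partition of $[0,1]$ into $K$ pieces, both of which converge to $\int_0^1 \sqrt{a(s)}\,ds = x_*/\sqrt{2\log 2}$ by the Riemann integrability of $\sqrt a$ guaranteed in the remark immediately following Theorem~\ref{th:complexity_CREM}.

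With the limit in hand the parameter choice is straightforward: given $x > x_*$, first pick $\eps > 0$ small enough that $\sqrt{1+\eps}\, x_* < \tfrac{1}{2}(x_* + x)$, and then pick $K$ large enough that $\sqrt{(1+\eps)(2\log 2)}\, S_K < x$. The displayed bound then gives $X_v < xN$, contradicting $X_v \ge xN$. The only genuine analytic step is the convergence of the Riemann-like sum $S_K$, cleanly handled by the Darboux sandwich; this is where the paper's explicit observation that \emph{$\sqrt{a}$} (and not just $a$) is Riemann-integrable is used. Everything else is a direct telescoping estimate and a choice of parameters.
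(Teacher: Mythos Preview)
Your proof is correct and follows essentially the same approach as the paper: telescope $X_v$ along the ancestors at depths $\lfloor Nk/K\rfloor$, reduce to showing $\sqrt{(1+\eps)(2\log 2)}\,S_K < x$ for suitable $\eps,K$, and establish $S_K\to \int_0^1\sqrt{a}$ via Riemann integrability of $\sqrt a$. The only cosmetic difference is that the paper proves the limit by combining Jensen's inequality (for the lower bound, which it notes is not actually needed) with an upper Darboux bound, whereas you sandwich $S_K$ directly between the lower and upper Darboux sums of $\sqrt a$; both arguments are equally elementary and yield the same conclusion.
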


The proof of Lemma~\ref{lem:deterministic} can be found in Section~\ref{sec:proofs}. The heuristic behind it is the following: Recall that the trajectory $z_*$ satisfies $E(z_*,t) = 0$ for all $t\in[0,1]$. If a trajectory $z$ satisfies $(d/dt) E(z,t) \le 0$ for all $t\in[0,1]$, then $z(t)\le z_*(t)$ for all $t\in[0,1]$ and in particular, $z(1) \le x_*$. It follows that a trajectory $z$ ending at $z(1) = x > x_*$ must at some point exhibit an energetic increase: there must be $t \in [0,1]$ such that $z'(t) \ge \sqrt{(1+\eps)2\log 2 \times a(t)}$. The lemma is essentially a discretized version of this fact.

\begin{figure}
\begin{center}
\includegraphics{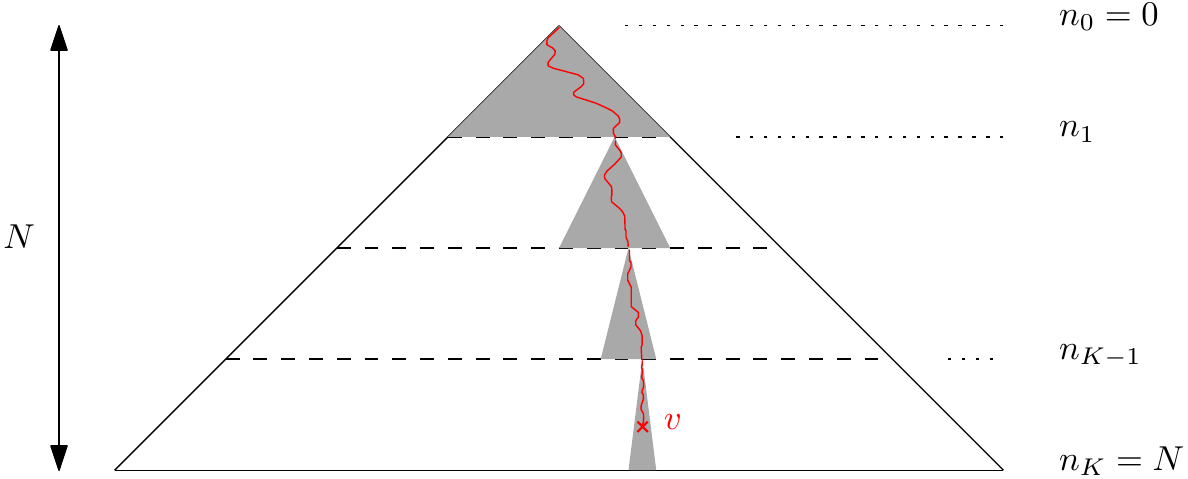}
\end{center}
\caption{\label{fig:chain_of_spindles} Illustration of the \emph{chain of spindles over $v$} (see main text for definition and notation).}
\end{figure}

We now describe how this is put to fruitful use for the proof of the hardness result. We will transform the search algorithm in two steps:
\begin{itemize}
\item We first localize the problem: Lemma~\ref{lem:deterministic} says that finding a vertex $v\in V_N$ with $X_v \ge xN$ is at least as hard as finding a vertex $v$ such that the inequality \eqref{eq:subproblem} is satisfied for some $k=1,\ldots,K$. We call this easier task the \emph{subproblem}.
\item We then eliminate dependencies by ``discovering'' more than just the value of the vertex we visit at each step. Specifically, for $v\in\T_N$, 
define the subset $\mathcal C_v\subset \T_N$ as follows:
\begin{align*}
\mathcal C_v &= \bigcup_{k=1}^K \mathcal C_{v,k},\quad\text{where}\\
\mathcal C_{v,k} &= \{w \in \T_N: |w| \le \lfloor Nk/K\rfloor \mbox{ and } \lfloor N(k-1)/K\rfloor \le |w \wedge v| \le \lfloor Nk/K\rfloor \}.
\end{align*}
\end{itemize}
Note that $\mathcal C_{v,k}$ is nonempty if and only if $|v| \ge \lfloor N(k-1)/K\rfloor$. 
We call $\mathcal C_v$ the \emph{chain of spindles over $v$}. See Figure~\ref{fig:chain_of_spindles} for an illustration. 



\begin{figure}
\begin{center}
\includegraphics{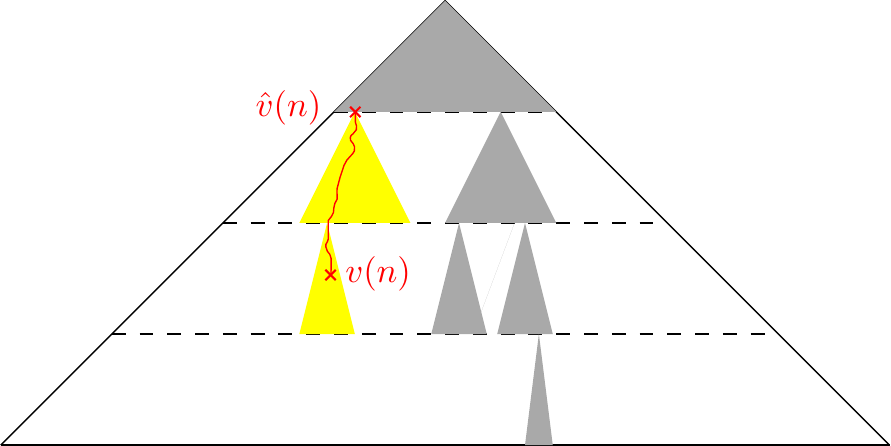}
\end{center}
\caption{\label{fig:independence} Illustration of the independence statement of Lemma~\ref{lem:independence}. The set $\mathcal C_{v(1)}\cup \ldots \cup \mathcal C_{v(n-1)}$ is shown in gray, the set $\mathcal C_{v(n)} \setminus (\mathcal C_{v(1)}\cup \ldots \cup \mathcal C_{v(n-1)})$ in yellow. The lemma implies in particular that the field $X$ on the yellow set is, up to a shift, independent of $\F_{n}$.}
\end{figure}

It is intuitively clear that we can only ``help'' the algorithm if at the $n$-th step we ``discover'' not only $X_{v(n)}$, but also the values of the vertices in the chain of spindles $\mathcal C_v$ over $v$. 
The advantage of this is that we create independence, which, as we will see below, allows us to bound the time for solving the subproblem from below by an geometrically distributed random variable. 

Formally, we define an enlarged filtration $\F'(\mathrm{v})=(\mathcal{F}_n')_{n \ge 0}$ by 
\[
\F'_n = \sigma\left(v(1),\ldots,v(n);\,X_{v},\, v\in \mathcal C_{v(1)}\cup \ldots \cup \mathcal C_{v(n)};\,U_1,\ldots,U_{n+1}\right) \supset \F_n.
\]
The algorithm $(v(n))_{n\ge1}$ is still previsible with respect to this filtration.
The next definitions are illustrated in Figure~\ref{fig:independence}. 
For $n \ge 1$, let $\mathcal{R}_n = \bigcup_{i \le n} \mathcal{C}_{v(i)}$ be the set of nodes in the chains of spindles over $v(1),\ldots,v(n)$. 
Also, let $\hat{v}(n)$ be the most recent ancestor of $v(n)$ in $\mathcal R_{n-1}$ if $n > 1$, and let $\hat{v}(n)$ be the root of $\mathbb{T}_n$ if $n=1$.

The following lemma formalizes the independence property mentioned above. 
\begin{lemma}
\label{lem:independence}
Fix any randomized search algorithm $\mathrm{v}=(v(n))_{n \ge 1}$. Then conditioned on $\F_{n-1}'$, the family of random variables $(X_v-X_{\hat{v}(n)})_{v\in \mathcal{R}_n\setminus \mathcal{R}_{n-1} }$ has the same law as $(X'_v-X'_{\hat{v}(n)})_{v\in \mathcal{R}_n\setminus \mathcal{R}_{n-1}}$, where $(X'_v)_{v \in \T_n}$ has the same law as $(X_v)_{v \in \T_n}$ and is independent of $\F_{n-1}'$. 
\end{lemma}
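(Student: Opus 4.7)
My plan is to exploit the branching property of $X$ via its edge-increment decomposition: set $\delta_u := X_u - X_{\mathrm{parent}(u)}$ for $u \in \T_N\setminus\{\emptyset\}$, so that the $\delta_u$'s are mutually independent centred Gaussians. The overall strategy is to identify the index sets of $\delta_u$'s that determine $\F'_{n-1}$ and those that determine $(X_v - X_{\hat v(n)})_{v \in \mathcal R_n \setminus \mathcal R_{n-1}}$, show these two sets are disjoint, and conclude independence; the asserted law equality will then follow automatically.

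First I will observe, directly from the definition of $\mathcal C_{v,k}$, that each $\mathcal C_v$ (and hence both $\mathcal R_{n-1}$ and $\mathcal R_n$) is closed under taking ancestors. This means $\F'_{n-1}$ is generated by $(\delta_u)_{u \in \mathcal R_{n-1}\setminus\{\emptyset\}}$ together with $U_1,\ldots,U_{n+1}$. The heart of the proof is then the \emph{geometric claim}: every $v \in \mathcal R_n \setminus \mathcal R_{n-1}$ is a strict descendant of $\hat v(n)$. To prove it, fix such a $v$ and let $k$ be such that $v \in \mathcal C_{v(n),k}$; write $a$ for the ancestor of $v(n)$ at depth $\lfloor N(k-1)/K\rfloor$, so $v$ descends from $a$ with $|v|\le \lfloor Nk/K\rfloor$. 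The routine sub-cases — where $v$ lies on the path from $a$ to $v(n)$, or where $|v\wedge v(n)|\ge |\hat v(n)|$ — yield strict descendants of $\hat v(n)$ immediately, using ancestor-closure to rule out $v\in\mathcal R_{n-1}$. The delicate sub-case is that $v$ branches off the path to $v(n)$ strictly above $\hat v(n)$, forcing $|a|<|\hat v(n)|$. Here I will take a witness $v(i)$ ($i<n$) with $\hat v(n)\in\mathcal C_{v(i),k'}$; comparing $|a|<|\hat v(n)|\le\lfloor Nk'/K\rfloor$ with $|a|=\lfloor N(k-1)/K\rfloor$ gives $k\le k'$, so $v(i)$ and $v(n)$ agree up to depth $\lfloor N(k-1)/K\rfloor$ and $a$ is also an ancestor of $v(i)$. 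Since $v$ descends from $a$ at depth $\le\lfloor Nk/K\rfloor$, one reads off $v\in\mathcal C_{v(i),k}\subseteq\mathcal R_{n-1}$, a contradiction.

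Next I will check that for every $v \in \mathcal R_n \setminus \mathcal R_{n-1}$ and every $u$ with $\hat v(n)<u\le v$, one has $u\notin\mathcal R_{n-1}$. The child $c$ of $\hat v(n)$ on the path to $v$ cannot itself lie in $\mathcal R_{n-1}$: if $c$ is on the path to $v(n)$ this is the maximality of $\hat v(n)$; otherwise a variant of the previous spindle argument, using a witness for $c\in\mathcal R_{n-1}$ and the same identification $a_k^{(i)}=\hat v(n)$, would force $v\in\mathcal R_{n-1}$. Ancestor-closure of $\mathcal R_{n-1}$ then rules out every deeper $u$. Hence $X_v-X_{\hat v(n)} = \sum_{\hat v(n)<u\le v}\delta_u$ is a measurable function of $\delta_u$'s indexed outside $\mathcal R_{n-1}\setminus\{\emptyset\}$, and independence of the $\delta_u$'s yields independence of the family $(X_v-X_{\hat v(n)})_{v\in\mathcal R_n\setminus\mathcal R_{n-1}}$ from $\F'_{n-1}$. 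Its joint law is Gaussian and depends only on $\hat v(n)$ and $\mathcal R_n\setminus\mathcal R_{n-1}$, both $\F'_{n-1}$-measurable; since $X'$ has the same law as $X$ and is independent of $\F'_{n-1}$, this matches the conditional law of $(X'_v-X'_{\hat v(n)})_{v\in\mathcal R_n\setminus\mathcal R_{n-1}}$, as required. The main obstacle is the geometric claim: closure under ancestors makes the picture intuitively clean, but ruling out the case where $v$ branches off above $\hat v(n)$ genuinely requires producing the right witness $v(i)$ and carefully matching up the spindle indices.
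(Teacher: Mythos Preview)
Your argument is correct and rests on the same idea as the paper's proof, namely the branching property of $\rX$; the paper simply writes ``This is an immediate consequence of the branching property'' and leaves it at that. What you have done is make explicit the two geometric facts one needs in order to invoke the branching property at the vertex $\hat v(n)$: (i) every $v\in\mathcal R_n\setminus\mathcal R_{n-1}$ is a strict descendant of $\hat v(n)$, and (ii) the edge increments along the path from $\hat v(n)$ to such a $v$ are all ``fresh'', i.e.\ indexed outside $\mathcal R_{n-1}$. Your spindle-matching argument for the delicate sub-case (branching above $\hat v(n)$) is sound, and ancestor-closure of $\mathcal R_{n-1}$ does the rest. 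In fact a slightly cleaner route is available: one can show directly that \emph{no} strict descendant of $\hat v(n)$ lies in $\mathcal R_{n-1}$ (if some child $c$ of $\hat v(n)$ were in $\mathcal C_{v(i),k''}$, then the sibling $c'$ on the path to $v(n)$ would also lie in $\mathcal C_{v(i)}$, contradicting maximality of $\hat v(n)$), which then lets you apply the branching property at $\hat v(n)$ in one stroke without the path-by-path analysis. Either way, your proof is a faithful unpacking of the paper's one-liner.
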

\begin{proof}
This is an immediate consequence of the branching property.
\end{proof}
The next lemma is the last ingredient for the proof of Theorem~\ref{th:complexity_CREM}. Its proof is again found in Section~\ref{sec:proofs}.
\begin{lemma}
\label{lem:subproblem}
For all $K\in\N$ and $\eps>0$, for any $\gamma \in (0,(\eps \log 2)/K)$, for all $N$ sufficiently large, for any $w\in\T_N$, 
\[
\P\left(\exists~v\in\mathcal C_{w}~:~v\mbox{ is }\mbox{$(\eps,K,\mathrm{X})$-steep}\right) \le e^{-\gamma N}
\] 
\end{lemma}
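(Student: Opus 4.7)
The plan is to union bound over $k \in \{1,\ldots,K\}$ and then over the descendants of the relevant ancestor of $w$ at each level, combining a Gaussian tail estimate with a first-moment count of candidate vertices.

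First I would identify, for each $k$, the set of vertices in $\mathcal{C}_w$ that can actually witness $(\eps,K,\mathrm{X})$-steepness at level $k$. Writing $m_k:=\lfloor Nk/K\rfloor$, a vertex $v$ of generation $m_k$ lies in $\mathcal{C}_w$ if and only if $v[m_{k-1}] = w[m_{k-1}]$; in other words, such a vertex is one of the at most $2^{m_k-m_{k-1}} \le 2\cdot 2^{N/K}$ descendants of $w[m_{k-1}]$ in generation $m_k$. By the branching property, for every such $v$ the increment $Z_{v,k}:=X_v - X_{v[m_{k-1}]}$ is centered Gaussian with variance $\sigma_{k,N}^2 := N\,(A(m_k/N)-A(m_{k-1}/N))$.

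Next, in the non-degenerate case $A(k/K)>A((k-1)/K)$, I would apply the standard Gaussian tail bound $\P(\mathcal{N}(0,\sigma^2)\ge t)\le e^{-t^2/(2\sigma^2)}$ with the steepness threshold $t_k = N\sqrt{(1+\eps)(2\log 2)(A(k/K)-A((k-1)/K))/K}$, giving
\[
\P(Z_{v,k}\ge t_k)\le \exp\!\left(-(1+\eps)(\log 2)\,\frac{N}{K}\cdot\frac{A(k/K)-A((k-1)/K)}{A(m_k/N)-A(m_{k-1}/N)}\right).
\]
By uniform continuity of $A$ on $[0,1]$ (which follows from absolute continuity with bounded $a$), for any $\eta>0$ and all $N$ large enough (uniformly in $k$), the ratio in the exponent exceeds $1-\eta$, so the bound is at most $\exp(-(1+\eps/2)(\log 2)N/K)$. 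A union bound over the at most $2\cdot 2^{N/K}$ vertices in level $k$ then yields
\[
\P(\exists\,v\in S_k(w):\,Z_{v,k}\ge t_k)\;\le\;2\cdot 2^{N/K}\,e^{-(1+\eps/2)(\log 2)N/K}\;=\;2\,e^{-(\eps/2)(\log 2)N/K}.
\]
Summing over the $K$ levels gives $\P(\exists\,v\in\mathcal{C}_w:\,v\text{ is }(\eps,K,\mathrm{X})\text{-steep})\le 2K\,e^{-(\eps/2)(\log 2)N/K}$, which is bounded by $e^{-\gamma N}$ for $N$ large whenever $\gamma<\eps(\log 2)/K$. Degenerate levels, where $A(k/K)=A((k-1)/K)$, contribute nothing because $a\equiv 0$ a.e.\ on $[(k-1)/K,k/K]$ forces both the increment and the threshold to be zero; such $k$'s can be excluded from the definition of steepness without affecting the deterministic input Lemma~\ref{lem:deterministic}.

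The main obstacle is purely technical: keeping the approximation $A(m_k/N)-A(m_{k-1}/N)\to A(k/K)-A((k-1)/K)$ uniform in $k$, and ensuring that the resulting slack in the Gaussian exponent is strictly less than the $\eps$-gap built into the steepness threshold. Once this uniformity is secured, the ``Cramér computation'' $2^{N/K}\cdot\exp(-(1+\eps)(\log 2)N/K)=\exp(-\eps(\log 2)N/K)$ is the only genuinely essential inequality in the proof, and it is exactly the exponent $\gamma$ permitted by the hypothesis.
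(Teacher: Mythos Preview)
Your approach is essentially identical to the paper's: union bound over $k=1,\ldots,K$, count at most $2\cdot 2^{N/K}$ generation-$m_k$ vertices in each spindle $\mathcal C_{w,k}$, apply a Gaussian tail bound to each increment, and combine. The only cosmetic difference is that the paper directly bounds the increment variance by $N(A(k/K)-A((k-1)/K))$, whereas you compute the exact variance $N(A(m_k/N)-A(m_{k-1}/N))$ and control the ratio via uniform continuity of $A$; both amount to the same thing up to an $o(1)$ in the exponent.

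There is one quantitative slip. After fixing $\eta$ so that $(1+\eps)(1-\eta)\ge 1+\eps/2$, you obtain the bound $2K\,e^{-(\eps/2)(\log 2)N/K}$ and then assert this is $\le e^{-\gamma N}$ for \emph{every} $\gamma<\eps(\log 2)/K$. That conclusion is false as written: your bound only gives $\gamma<(\eps/2)(\log 2)/K$. The repair is to choose $\eta$ depending on $\gamma$: since $\gamma K/\log 2<\eps$, take $\eta$ small enough that $(1+\eps)(1-\eta)-1>\gamma K/\log 2$, and then the post-union-bound exponent exceeds $\gamma N$ for large $N$, recovering the full range stated in the lemma. (Your remark on degenerate levels $A(k/K)=A((k-1)/K)$ is well taken and the suggested fix of excluding such $k$ from the definition of steepness is sound; note however that the increment there need not be \emph{exactly} zero, since $m_{k-1}/N$ may sit just below $(k-1)/K$, giving an $O(1)$ variance.)
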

We are now ready to finish the proof of Theorem~\ref{th:complexity_CREM}.
\begin{proof}[Proof of the second part of Theorem~\ref{th:complexity_CREM}]
Let $x > x_*$ and let $(v(n))_{n\ge0}$ be a randomized search algorithm. As noted above, this implies that $(v(n))_{n\ge0}$ is previsible with respect to the filtration $(\F_n')_{n\ge0}$. Let $K=K(x)$ and $\eps=\eps(x)$ be as in Lemma~\ref{lem:deterministic} and let 
\[
 \tau = \inf\{n\in \N: 
 \exists~v\in\mathcal C_{v(n)} \mbox{ such that $v$ is }(\eps,K,\mathrm{X})-\mathrm{steep}\}.
\]
Then $\tau \le \tau_x$, by Lemma~\ref{lem:deterministic}. Moreover, 
by Lemma~\ref{lem:independence}, 
\begin{align*}
& \P\left(\exists~v\in\mathcal{R}_n\setminus \mathcal{R}_{n-1}~:~v\mbox{ is }\mbox{$(\eps,K,\mathrm{X})$-steep}~|~ \F_{n-1}'\right)\\
 = \ &\P\left(\exists~v\in\mathcal{R}_n\setminus \mathcal{R}_{n-1}~:~v\mbox{ is }\mbox{$(\eps,K,\mathrm{X}')$-steep}~|~ \F_{n-1}'\right)\\
 \le \ &
\P\left(\exists~v\in\mathcal{C}_{v(n)}~:~v\mbox{ is }\mbox{$(\eps,K,\mathrm{X}')$-steep}~|~ \F_{n-1}'\right)\\
\le \ &
\sup_{w\in\T_N} 
\P\left(\exists~v\in\mathcal{C}_w~:~v\mbox{ is }\mbox{$(\eps,K,\mathrm{X}')$-steep}\right).
\end{align*}
The first inequality uses the fact that $\mathcal{R}_n\setminus \mathcal{R}_{n-1} \subset \mathcal{C}_{v(n)}$ and the second inequality uses the independence of $\rX'$ and $\F_{n-1}'$.
Since $\rX'$ and $\rX$ have the same law, 
by Lemma~\ref{lem:subproblem}, with $\gamma=\gamma(K,\eps)$ as in that lemma the preceding bound yields that 
\[
\P\left(\exists~v\in\mathcal{R}_n\setminus \mathcal{R}_{n-1}~:~v\mbox{ is }\mbox{$(\eps,K,\mathrm{X})$-steep}~|~ \F_{n-1}'\right) \le  e^{-\gamma N}.
\]
We thus obtain 
\begin{align*}
\P\left(\tau=n\right)
& = \E\left(\Ind_{\tau > n-1}\cdot \P\left(\tau=n~|~\F_{n-1}'\right)\right) \\
& = \E\left(\Ind_{\tau > n-1}\cdot  \P\left(\exists~v\in\mathcal{R}_n\setminus \mathcal{R}_{n-1}~:~v\mbox{ is }\mbox{$(\eps,K,\mathrm{X})$-steep}~|~ \F_{n-1}'\right)\right) \\
& \le \P\left(\tau > n-1\right) \cdot e^{-\gamma N} \, ,
\end{align*}
from which the result is immediate. 
%
\end{proof}

\subsection{Proofs of Lemmas~\ref{lem:deterministic} and \ref{lem:subproblem}}
\label{sec:proofs}

\begin{proof}[Proof of Lemma~\ref{lem:deterministic}]
We first show that
\begin{equation}
\label{eq:limit_A}
 \lim_{K\to\infty} \sum_{k=1}^K \sqrt{\frac{A(k/K)-A((k-1)/K)}{K}} = \int_0^1 \sqrt{a(s)}\,ds.
\end{equation}
Indeed, for every $K\in\N$, the sum on the left-hand side is bounded from below by the integral on the right-hand side which is easily seen by applying Jensen's inequality to each term in the sum. As for the upper bound (which is the one we will need later), we have for every $K\in\N$,
\[
 \sum_{k=1}^K \sqrt{\frac{A(k/K)-A((k-1)/K)}{K}} \le \frac 1 K \sum_{k=1}^K \max_{x\in \left[\frac{k-1}K,\frac k K\right]} \sqrt{a(s)}\,ds,
\]
and this sum converges to the right-hand side of \eqref{eq:limit_A} by the Riemann-integrability of $\sqrt a$.

Now fix $x> x_* = \sqrt{2\log 2} \int_0^1 \sqrt{a(s)}\,ds$ and let $\eps>0$ be such that $ x_*\cdot\sqrt{1+\eps} < x$. Then by \eqref{eq:limit_A}, there exists $K\in\N$, such that
\[
 \sum_{k=1}^K \sqrt{(1 + \eps)\frac{(2\log 2) (A(k/K)-A((k-1)/K))}{K}} < x.
\]
Now fix $w \in V_N$. Then 
\[
X_w = \sum_{k=1}^K (X_{w[\lfloor N k/K\rfloor]}-X_{w[\lfloor N (k-1)/K\rfloor]})\, ,
\]
so if $w$ has no $(\eps,K,\rX)$-steep ancestor then 
\[
X_w < N \cdot \sum_{k=1}^K \sqrt{(1 + \eps)\frac{(2\log 2) (A(k/K)-A((k-1)/K))}{K}} < xN.
\]
This proves the lemma. 
\end{proof}

\begin{proof}[Proof of Lemma~\ref{lem:subproblem}]
First note that if $w'$ is an ancestor of $w$ then $\mathcal C_{w'} \subset \mathcal{C}_w$. It follows by subadditivity of probabilities that in proving the lemma we may restrict our attention to $w \in V_N$. For such $w$, we next decompose the probability we aim to bound spindle-wise: 
\begin{align*}
\P\left(\exists~v\in\mathcal C_{w}~:~v\mbox{ is }\mbox{$(\eps,K,\mathrm{X})$-steep}\right) 
& = \sum_{k=1}^K\P\left(\exists~v\in\mathcal C_{w,k}:v\mbox{ is }\mbox{$(\eps,K,\mathrm{X})$-steep}\right) \\
& = \sum_{k=1}^K \P\left(\exists~v\in\mathcal C_{w,k}:|v|=\lfloor Nk/K\rfloor,\mbox{ $v$ is }\mbox{$(\eps,K,\mathrm{X})$-steep}\right) 
\end{align*}
The number of nodes of $\mathcal{C}_{w,k}$ in generation 
$\lfloor Nk/K\rfloor$ is $2^{\lfloor Nk/K\rfloor - \lfloor N(k-1)/K\rfloor} \le 2^{1+N/K}$, so by symmetry, writing $w^{(k)} = w[\lfloor Nk/K\rfloor]$, we have
\begin{align*}
& \P\left(\exists~v\in\mathcal C_{w,k}:|v|=\lfloor Nk/K\rfloor,\mbox{ $v$ is }\mbox{$(\eps,K,\mathrm{X})$-steep}\right) \\
& = 2^{1+N/K}\cdot 
\P\left(X_{w^{(k)}} - X_{w^{(k-1)}} \ge N\cdot \sqrt{(1 + \eps)\frac{(2\log 2) (A(k/K)-A((k-1)/K))}{K}} \right).
\end{align*}
Finally, $X_{w^{(k)}} - X_{w^{(k-1)}}$ is a centred Gaussian with variance bounded by $(A(k/K)-A((k-1)/K))N$. By standard Gaussian tail estimates (see, e.g., \cite[Lemma 12.9]{MR2604525}) it follows that 
\begin{align*}
\P\left(X_{w^{(k)}} - X_{w^{(k-1)}} \ge N\cdot \sqrt{(1 + \eps)\frac{(2\log 2) (A(k/K)-A((k-1)/K))}{K}} \right)
& \le e^{-(1+\eps)(\log 2) N/K}\, ,
\end{align*}
and combining the three preceding displays yields that 
\[
\P\left(\exists~v\in\mathcal C_{w}~:~v\mbox{ is }\mbox{$(\eps,K,\mathrm{X})$-steep}\right) 
= \sum_{k=1}^K 2^{1+N/K} e^{-(1+\eps)(\log 2) N/K} \le 2K\cdot 2^{-\eps N/K}\, ,
\]
from which the lemma is immediate. 
\end{proof}

\section{A weaker algorithmic model}  \label{sec:weakermodel}

The CREM is the Gaussian process $(X_v)_{v \in V_N}$ sitting on the leaves of $\mathbb{T}_N$. As such, it's natural to ask what can be achieved algorithmically if one only has access to information about values on $V_N$ rather than on all of $\mathbb{T}_N$. This restricts the class of allowed algorithms, so for $x > x_*$ it clearly remains hard to find nodes $v \in V_N$ with $X_v > x_* N$. 

When $x < x_*$, we may modify the greedy algorithm to obtain an algorithm which runs in time $N^{2+o(1)}$ but only queries nodes in $V_N$. The modification works as follows. Fix a parameter $\ell \ge 1$ and write $m=2^\ell$. When we wish to sample an internal node $v\in V_g$, we instead estimate $X_v$ by averaging the values observed at a subset of its descendants in $V_N$. 

Let $v_1,\ldots,v_{m}$ be the generation-$(g+\ell)$-descendants of $v$. For each $1 \le i \le m$ let $\hat{v}_i$ be a  descendant of $v_i$ at generation $N$ chosen according to some arbitrary deterministic or random rule (but independently of the process $(X_v)_{v \in V_N}$). Then use $\hat{X}_v = \frac{1}{m} \sum_{i \le m} X_{\hat{v}_i}$ as a proxy for $X_v$ in the algorithm.
For $1 \le j \le \ell$, there are $2^j$ edges between generation-$(g+j-1)$ and generation-$(g+j)$ descendants of $v$; list the displacements along those edges as $D^j_1,\ldots,D^j_{2^j}$. These random variables are centred Gaussians with variance $N(A((g+j)/N-A((g+j-1)/N))$. They are mutually independent, and independent for different generations. Each random variable $X_{\hat{v}_i}$ may be decomposed as 
\[
X_{\hat{v}_i} = X_v + \sum_{j=1}^\ell D^j_{q(i,j)} + \Delta_{\hat{v}_i},
\]
for some indices $(q(i,j))_{1 \le j \le \ell}$. The random variables $\Delta_{\hat{v}_i}$ are independent of each other and of the $D^j_k$, and are centred Gaussians with variance $N(1-A((k+\ell)/N)) \le N$. 

Each variable $D^j_1,\ldots,D^j_{2^j}$ contributes to $2^{\ell-j}=m/2^j$ of the samples $X_{\hat{v}_i}$. It follows that 
\begin{align*}
\hat{X}_v & = X_v + \frac{1}{m} \sum_{j=1}^\ell \frac{m}{2^j} \sum_{k=1}^{2^j} D^j_k + \frac{1}{m} \sum_{i=1}^m \Delta_{\hat{v}_i}\, ,
\end{align*}
so, the sample error $E_v = \hat{X}_v-X_v$ 
is a centred Gaussian with variance 
\begin{align*}
& \sum_{j=1}^\ell 2^{-j} N(A((g+j)/N-A((g+j-1)/N)) + \frac{1}{m} N(1-A((k+\ell)/N))  \le \sup_{t \in [0,1]} a(t) + \frac{N}{2^\ell}\, .
\end{align*}
Taking $\ell = \lfloor \log_2 N\rfloor$, and recalling that $\sup_{t \in [0,1]} a(t) < \infty$, as noted just after the statement of Theorem~\ref{th:complexity_CREM}, it follows that the overall sample error is a Gaussian with variance bounded by some $B  \in (1,\infty)$. 

In a given step $n$ of the modified algorithm, we evaluate
\[
v(M,n+1) = \argmax_{\substack{v\ge v(M,n)\\|v| = (Mn+M) \wedge N}} \hat{X}_v= \argmax_{\substack{v\ge v(M,n)\\|v| = (Mn+M) \wedge N}} (X_v+E_v),
\]
using the sample means given by the above procedure. 
To bound the estimation error incurred in step $n$, note that each $E_v$ satisfies $\P(|E_v| \ge 2B \sqrt {\log N}) \le N^{-2}$. 
A union bound over the $N/M$ levels and over the $2^M$ sample errors in each level gives that the worst overall sample error is $O(\sqrt{\log N})$ with probability $1-O(2^M/(MN))$. 

Now take $M = \lfloor (\log N)^{2/3} \rfloor$ (the exponent $2/3$ can be replaced by any value in the interval $(1/2,1)$). Then with probability $1-o(1)$, the total accumulated error along the path built by the algorithm is $O((N/M) \cdot (\log N)^{1/2}) = o(N)$. The number of sample estimates computed by the algorithm is $O((N/M)\cdot 2^M)$. Each sample estimate involves $2^\ell = O(N)$ queries, so the total number of queries is $O(N^2 2^{(\log N)^{2/3}}) = N^{2+o(1)}$.

\section{Open questions}\label{sec:open_questions}
In this section we state a few questions/directions for future research.
\begin{enumerate}
\item Say that a sequence of measures $\nu_N$ on $V_N$ \emph{approximates} another sequence of measures $\mu_N$ on $V_N$ if 
\[
\frac 1 N D_{KL}(\nu_N\|\mu_N) \to 0,\quad N\to\infty,
\]
where $D_{KL}(\nu\|\mu) = \sum \nu(x)\log(\nu(x)/\mu(x))$ is the Kullback--Leibler divergence from $\mu$ to $\nu$. For a given $\beta\ge 0$, say that the Gibbs measure $\mu_\beta = \mu_{\beta,N}$ of the CREM can be \emph{efficiently approximated} if there exists a sequence of algorithms $\mathrm{v}_N$ on $\T_N$ and a sequence of stopping times $\tau_N$, such that with high probability, the law of $v_N(\tau_N)$ approximates $\mu_{\beta,N}$ and $\tau_N \le N^{O(1)}$. We then conjecture: \emph{there exists a threshold $\beta_G$, such that $\mu_\beta$ can be efficiently approximated if $\beta < \beta_G$ and cannot if $\beta > \beta_G$.} We denote the corresponding conjectural threshold in the negative energies by $x_G$.

Let us give some details and ideas supporting this conjecture. We use the notation from Appendix~\ref{sec:app}. Consider for every $x\ge0$ the variational problem
\[
b_x = \operatorname{argmin}\left\{E(b,1): b\in\mathcal R,\ \int_0^1 b(t)\,dt = x\right\}.
\]
We expect that typical particles sampled from the Gibbs measure $\mu_\beta$ with inverse temperature $\beta$ corresponding to $x$ follow approximately the trajectory with derivative $b_x$. It is known \cite{Bovier2004a} that $x$ and $\beta$ are in correspondence through
\begin{align*}
x &= \sqrt{2\log 2} \int_0^{t_0} \sqrt{\hat a(s)}\,ds + \beta (1-A(t_0))\\
\text{where}\quad t_0 &= t_0(\beta) = \sup\left\{t\ge0:\beta > \frac{\sqrt{2\log 2}}{\sqrt{\hat a(t)}}\right\}\, .
\end{align*}
Furthermore, one can prove, along the lines of Proposition~\ref{prop:r_max} (see also \cite{Bovier2004a}), that
\begin{align*}
b_x(t) &= 
\begin{cases} 
v(t), & t\le t_0\\
\beta a(t), & t> t_0\, .
\end{cases}
\end{align*}
From this, it is not too complicated to derive the following fact: \emph{$E(b_x,t)$ is non-increasing in~$t$ if and only if $\beta \le \beta_G$}, where
\[
\beta_G = \frac{\sqrt{2\log 2}}{\operatorname{ess~sup}_{t\ge t_G} \sqrt{a(t)}},\quad t_G = \sup\{t\in[0,1]: A(t) = \hat A(t)\},
\]
with $\beta_G = \sqrt{2\log 2}/\sqrt{\hat a(1)}$ in the special case $A = \hat A$,
or equivalently, \emph{if and only if $x\le x_G$}, where
\[
x_G = \sqrt{2\log 2} \int_0^{t_0(\beta_G)} \sqrt{\hat a(s)}\,ds + \beta_G (1-A(t_0(\beta_G))).
\]

We now argue that these are the values of the thresholds mentioned above (and so, {\em a fortiori}, that the thresholds exist). First, if $x < x_G$, then $E(b_x,t)$ is non-increasing in $t$, and one can use a variant of the linear-time algorithm from this article in order to sample particles approximately following the trajectory with derivative $b_x$: at each step, instead of choosing the maximum vertex among the $2^M$ descendants $M$ levels down the tree one samples a vertex according to an appropriate Gibbs measure restricted to these descendants. This Gibbs measure is chosen so that particles sampled from it have the right ``speed'', which we believe is enough to yield that the output is a good approximation to $\mu_{\beta,N}$ with high probability; the fact that the energy is non-increasing precisely ensures that such particles are very likely to exist. 

On the other hand, if $x>x_G$, i.e.~if the energy $E(b_x,t)$ has a point of increase, then an argument similar to the proof of the hardness part of Theorem~\ref{th:complexity_CREM} should allow one to show that approximating the Gibbs measure is as hard as finding a subset of the tree which satisfies an event of exponentially small probability, and thus needs a time which is exponential in $N$.

One can check that $\beta_G < \beta_c$ if $A(x) < x$ for all $x\in(0,1)$ (in this case, $t_G = 0$), and that $\beta_G > \beta_c$ if $A$ is strictly concave (in this case, $t_G = 1$).

\item For fixed $A$, and $x > x_*(A)$, let 
\[ 
\gamma_*(x) = \limsup_{N\to\infty}\left[ \inf\left\{\gamma>0: \exists \text{~sequence~of~algorithms~$\mathrm{v}_N$~on~}\mathbb{T}_N: \P\left(\tau_x(\mathrm{v}_N)\le e^{\gamma N}\right) \stackrel{N\to\infty}\to 1\right\}\right].
\]
What is the behaviour of $\gamma_*$? In particular, what is the right-derivative of $\gamma_*(x)$ at $x=x_*$? Assuming that $a$ is piecewise continuous, we expect that 
\[
\gamma_*'(x_*) = \frac{\sqrt{2\log 2}}{\max_{0\le s\le t\le 1} \left\{ \sqrt{a(t)}-\sqrt{a(s)}\right\}}.
\]
The reason is that the best strategy in order to get to $x = x_* + h$, with $h$ small, should be to follow the natural speed path most of the time except at two times $s\le t$ such that $a(s)$ is small and $a(t)$ is large: at time $s$, the path should slow down in order to gain entropy (an exponential number of particles following this path), and this entropy should be used up at a later time $t$ with an atypically fast trajectory.

More generally, we expect that $\gamma_*$ is given in terms of the following variational principle (this was suggested to us by Bastien Mallein):
\begin{equation}
\label{eq:bastien}
\gamma_*(x) = \inf\left\{\gamma > 0: \sup\left\{\int_0^1 b(t)dt : b \in \mathcal{R},\,\forall t\in[0,1]:E(b,t) \ge - \gamma\right\} > x\right\}.
\end{equation}
The rationale behind the constraint that $\forall t\in[0,1]:E(b,t) \ge - \gamma$ is that this ensures that throughout the whole process, the number of particles following such a trajectory until a certain time is bounded by $\exp((\gamma+o(1)) N)$, and so this is the maximum size of the ``pool'' of particles from which we have to find one leading up to $xN$.

Proving an upper bound on $\gamma_*$ using the above heuristics should be achievable, but we expect that proving the corresponding lower bound might be hard, unless one restricts the class of admissible algorithms.

\item Related to the last question, it is natural to ask about the form of the transition from linear to exponential complexity as the threshold $x_*$ is crossed. In particular, does any algorithm which finds a node $v \in V_N$ with $X_v \ge x_* N -o(N)$ require $\omega(N)$ queries with high probability? We expect this is the case. A related question for a homogeneous branching random walk has been studied by Pemantle \cite{Pemantle2009}.

\item \label{item:spherical_threshold}
 It is clearly of interest to determine algorithmic barriers for other models. The {\em spherical $p$-spin} models, defined in Section~\ref{sec:discussion}, are one setting where progress might be achievable. For these models, typical local minima of the energy landscape concentrate near energy $-N\cdot E_{\infty}$ for $N$ large, where $E_{\infty}=2\sqrt{(p-1)/p}$. Moreover, with high probability, there are no critical points of finite index at higher energies \cite{Auffinger2013a}. As such, it is reasonable to guess that $E_{\infty}$ identifies the level at which algorithms based on gradient descent will be  blocked. It would be very interesting to know whether $E_{\infty}$ in fact determines the algorithmic threshold. 

\end{enumerate}

\appendix

\section{The maximizer of the variational principle}\label{sec:app}
In this appendix, we detail some of the assertions from Section~\ref{sec:howtounderstand}. We use throughout the notation defined in the introduction.

We work with the derivatives of the functions, rather than the functions themselves. Recall the definition of the functional $E$ from Section~\ref{sec:howtounderstand}, and let
\[
 \mathcal R = \{b:[0,1] \to \R\mbox{ measurable}: \forall t\in[0,1]: E(b,t) \le 0\}.
\]
Also recall that $\hat{A}$ denotes the concave hull of $A$, and $\hat{a}$ the left-derivative of~$\hat{A}$. 
Define a function $v:[0,1] \to [0,\infty)$ by setting 
\[
v(s) = a(s) \cdot \Big(\frac{2\log 2}{\hat{a}(s)}\Big)^{1/2}\, .
\]
\begin{proposition}\label{prop:r_max}
Under the conditions of Theorem~\ref{th:complexity_CREM}, the function $v$ is an element of $\mathcal{R}$, and 
\[
\int_0^1 v(t)\mathrm{d}t = \sup\left\{\int_0^1 b(t)dt : b \in \mathcal{R}\right\} = (2\log 2)^{1/2} \int_0^1 \hat{a}(t)^{1/2}\mathrm{d}t\, .
\]
\end{proposition}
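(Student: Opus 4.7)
The plan is to prove the three claims of the proposition in sequence: (1) $v \in \mathcal R$; (2) $\int_0^1 v = \sqrt{2\log 2}\int_0^1 \sqrt{\hat a}$; and (3) for every $b \in \mathcal R$, $\int_0^1 b \le \sqrt{2\log 2}\int_0^1 \sqrt{\hat a}$. Underlying all three arguments is the structural decomposition of $[0,1]$ induced by the concave hull: $[0,1] = \mathcal K \cup \bigcup_k (s_1^{(k)}, s_2^{(k)})$, where $\mathcal K := \{A = \hat A\}$ is the (closed) contact set on which $a = \hat a$ almost everywhere, and each maximal open ``gap'' $(s_1, s_2)$ is one on which $A < \hat A$ strictly. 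On each gap, $\hat A$ is affine so $\hat a$ equals a constant $c > 0$, and continuity of $A$ (an antiderivative of the bounded function $a$) at the gap endpoints yields $\int_{s_1}^{s_2} a = c(s_2 - s_1)$.

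Claims (1) and (2) are routine computations using this decomposition. Writing $E(v,t) = (\log 2)\int_0^t (a/\hat a - 1)\, ds$, the integrand vanishes a.e.\ on $\mathcal K$, each full gap contributes $0$, and for $t$ interior to a gap with constant $c$ the partial integral from the gap's left endpoint equals $(A(t) - \hat A(t))/c \le 0$. Hence $E(v,t) \le 0$, establishing (1). Exactly the same decomposition shows $\int_0^1 (a - \hat a)/\sqrt{\hat a}\, dt = 0$, so $\int_0^1 v = \sqrt{2\log 2}\int_0^1 a/\sqrt{\hat a}\, dt = \sqrt{2\log 2}\int_0^1 \sqrt{\hat a}\, dt$, giving (2).

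The substantive content is claim (3), for which I would use a duality trick via integration by parts against the weight $g(t) := \sqrt{\hat a(t)}$, which is non-increasing since $\hat A$ is concave. Given $b \in \mathcal R$ (WLOG $b \ge 0$, since replacing $b$ by $|b|$ preserves $\mathcal R$ and can only increase $\int b$), let $F(t) := \int_0^t b^2/(2a)\, ds$; this $F$ is absolutely continuous, non-decreasing, vanishes at $0$, and satisfies $F(t) \le (\log 2)\, t$. Riemann--Stieltjes integration by parts against $g$ yields $\int_0^1 g F'\, dt = g(1) F(1) + \int_0^1 F\, d(-g)$. Using $F(1) \le \log 2$, the pointwise bound $F(t) \le (\log 2) t$ against the non-negative measure $-dg$, and the further identity $\int_0^1 t\, d(-g) = \int_0^1 g\, dt - g(1)$, one obtains
\[
\int_0^1 \frac{b(t)^2}{a(t)}\, g(t)\, dt \le 2(\log 2) \int_0^1 g(t)\, dt.
\]
Cauchy--Schwarz then gives $\int_0^1 b \le \bigl(\int b^2 g/a\bigr)^{1/2}\bigl(\int a/g\bigr)^{1/2} \le \sqrt{2\log 2}\,\bigl(\int g \cdot \int a/g\bigr)^{1/2}$, and by (2) applied in reverse we have $\int a/g = \int a/\sqrt{\hat a} = \int \sqrt{\hat a} = \int g$, producing the desired bound $\sqrt{2\log 2}\int_0^1 \sqrt{\hat a}$.

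The main technical delicacy will be making the Stieltjes integration by parts rigorous when $\hat a$ can have jump discontinuities or flat plateaus; this is handled by interpreting $dg$ as a signed Borel measure on $[0,1]$ and using that $F$ is continuous, so no boundary ambiguities arise. Positivity and integrability questions ($\hat a > 0$ on $[0,1)$ by monotonicity of $\hat a$ together with $\int_0^1 \hat a = 1$; boundedness of $a$ from Riemann integrability; and $\int a/\sqrt{\hat a} < \infty$ directly from (2)) ensure all quotients are well-defined, with the convention $0/0 = 0$ already embedded in the definition of $E$ covering any remaining degenerate points.
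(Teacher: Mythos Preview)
Your proof is correct. Claims (1) and (2) are established via essentially the same structural decomposition the paper packages as Lemma~\ref{lem:flip}; you have simply proved the two special cases $g(y)=y^{-1}$ and $g(y)=y^{-1/2}$ directly.

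For claim (3) your route differs from the paper's, though the two share a key step. The paper linearizes via convex duality: writing $\kappa_t^*(x)=x^2/(2a(t))-\log 2$ and $\theta_t=(2\log 2/\hat a(t))^{1/2}$, the tangent-line inequality $\kappa_t^*(b(t))\ge \kappa_t^*(v(t))+\theta_t(b(t)-v(t))$ immediately gives $\int(v-b)\ge \int(\kappa_t^*(v)-\kappa_t^*(b))/\theta_t$, and then an integration by parts against $1/\theta_t$ (which is proportional to your $g=\sqrt{\hat a}$) together with $E(b,\cdot)\le 0$ finishes. You instead bound $\int b$ by Cauchy--Schwarz against the pair $(b\sqrt{g/a},\sqrt{a/g})$, then control $\int b^2 g/a$ by the same integration by parts against $g$, and close using the identity $\int a/\sqrt{\hat a}=\int\sqrt{\hat a}$ from (2). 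The IBP step is identical in substance; only the outer inequality differs (tangent line vs.\ Cauchy--Schwarz). Your argument is slightly more elementary and self-contained, while the paper's convex-duality formulation, being phrased through the Fenchel--Legendre transform $\kappa_t^*$, is set up to generalize beyond Gaussian increments as in the branching-random-walk literature it cites.
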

In proving the proposition, we shall use the following fairly straightforward fact about the concave hull. 
\begin{lemma}\label{lem:flip}
For any measurable function $g:[0,\infty) \to [0,\infty)$, 
\and any $t \in (0,1]$, 
\[
\int_0^t g(\hat{a}(x)) A(\mathrm{d}x) + g(\hat{a}(t))(\hat{A}(t)-A(t))=\int_0^{t} g(\hat{a}(x)) \hat{a}(x)\mathrm{d}x.
\]
\end{lemma}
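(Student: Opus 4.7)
My plan is to reduce the claim to an integration-by-parts–style calculation on $\phi(x) \coloneqq \hat A(x) - A(x)$. Because both $A$ and $\hat A$ are absolutely continuous (with densities $a$ and $\hat a$ respectively), subtracting $\int_0^t g(\hat a(x))\,a(x)\,\mathrm dx$ from both sides reduces the identity to
\[
\int_0^t g(\hat a(x))\,\phi'(x)\,\mathrm dx \;=\; g(\hat a(t))\,\phi(t),
\]
where $\phi \ge 0$, $\phi(0)=0$, $\phi$ is absolutely continuous, and $\phi' = \hat a - a$ almost everywhere.

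Next I would exploit the geometry of $\phi$. The set $F \coloneqq \{\phi = 0\}$ is closed and its complement $U \coloneqq \{\phi > 0\}$ is a countable disjoint union $\bigsqcup_i (l_i, r_i)$ of maximal open intervals, whose endpoints lie in $F$ by continuity. The crucial structural input is the classical fact that the concave hull $\hat A$ is \emph{affine} on the closure of each component $(l_i,r_i)$: given $x \in (l_i,r_i)$ with $\hat A(x) > A(x)$, a Carathéodory-type representation of the concave envelope produces $y_1,y_2\in F$ with $y_1<x<y_2$ such that $\hat A(x)$ equals the value at $x$ of the chord through $(y_1,A(y_1))$ and $(y_2,A(y_2))$; concavity of $\hat A$ then forces $\hat A$ to coincide with that chord on all of $[y_1,y_2]\supseteq[l_i,r_i]$. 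In particular $\hat a$ is constant on each $(l_i, r_i)$, say equal to $c_i$.

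The integral then splits cleanly. On $F \cap [0,t]$, $\phi'$ vanishes almost everywhere (since $\phi$ is absolutely continuous, nonnegative, and identically zero on $F$), contributing nothing. On each component $(l_i,r_i)\subseteq[0,t]$ with $r_i\le t$,
\[
\int_{l_i}^{r_i} g(\hat a(x))\,\phi'(x)\,\mathrm dx \;=\; g(c_i)\bigl(\phi(r_i)-\phi(l_i)\bigr) \;=\; 0,
\]
because $l_i,r_i\in F$. The only surviving term is from the (at most one) component $(l_*,r_*)$ containing $t$ in $(l_*,r_*]$, which gives
\[
\int_{l_*}^{t} g(\hat a(x))\,\phi'(x)\,\mathrm dx \;=\; g(c_*)\,\phi(t) \;=\; g(\hat a(t))\,\phi(t),
\]
where I use that $\hat a(t) = c_*$ precisely because $\hat a$ is defined as a \emph{left} derivative. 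If no such component exists then $t \in F$, so $\phi(t) = 0$ and both sides vanish trivially. Summing the pieces yields the identity.

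The main obstacle, and essentially the only nontrivial input, is the structural claim that $\hat A$ is affine on each connected component of $\{\hat A > A\}$; the short argument sketched above through the extremal representation of the concave envelope is the one place where genuine convex analysis is used. Everything else is bookkeeping based on absolute continuity of $A$ and $\hat A$ together with the routine fact that the derivative of an absolutely continuous nonnegative function vanishes almost everywhere on its zero set.
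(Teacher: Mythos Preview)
Your proof is correct and follows essentially the same strategy as the paper's: both decompose $[0,t]$ into the closed set $\{A=\hat A\}$ and the open components of its complement, use that $\hat a$ is constant on each component (with endpoints in $\{A=\hat A\}$), and treat separately the at most one component containing $t$. Your framing via $\phi=\hat A-A$ and the explicit justification that $\phi'=0$ a.e.\ on $\{\phi=0\}$ is a mild repackaging of the paper's direct comparison of the measures $A(\mathrm dx)$ and $\hat A(\mathrm dx)$, but the decomposition and the key structural input are identical.
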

\begin{proof}
Remark that the set $\mathcal{I} = \{x: \hat{A}(x)\ne A(x)\}$ may be expressed as a countable union of intervals, $\mathcal{I} = \bigcup_{i \in I} (x_i,y_i)$, so that $\hat{a}$ is constant on each interval and takes distinct values on any two intervals. 


For $t \in [0,1]\setminus \mathcal{I}$, we have 
\begin{align*}
\int_{0}^t g(\hat{a}(x)) \hat{A}(\mathrm{d}x) 
& = \int_{\mathcal{I} \cap [0,t]} g(\hat{a}(x)) \hat{A}(\mathrm{d}x)
+ \int_{[0,t]\setminus \mathcal{I}} g(\hat{a}(x)) A(\mathrm{d}x),
\end{align*}
because $A(x)=\hat{A}(x)$ for all $x \in [0,t]\setminus \mathcal{I}$ and $A$ is absolutely continuous. We may write 
$\mathcal{I} \cap [0,t] = \bigcup_{j \in J} (x_j,y_j)$ for some $J \subset I$. For each $j \in J$, 
the function $\hat{a}$ is constant on $(x_j,y_j]$ (recall that $\hat a$ is the left-derivative of $\hat A$), so 
\begin{align*}
\int_{\mathcal{I} \cap [0,t]} g(\hat{a}(x)) \hat{A}(\mathrm{d}x)
& = \sum_{j \in J} \int_{(x_j,y_j)} g(\hat{a}(x)) \hat{A}(\mathrm{d}x) \\
& = \sum_{j \in J} g(\hat{a}(y_j))(\hat{A}(y_j)-\hat{A}(x_j))\\
& = \sum_{j \in J} g(\hat{a}(y_j))(A(y_j)-A(x_j))\\
& = \int_{\mathcal{I} \cap [0,t]} g(\hat{a}(x)) A(\mathrm{d}x)\, .
\end{align*}
Combining the two preceding displays gives 
\[
\int_{0}^t g(\hat{a}(x)) \hat{A}(\mathrm{d}x) = 
\int_{0}^t g(\hat{a}(x)) A(\mathrm{d}x) = \int_{0}^t g(\hat{a}(x)) A(\mathrm{d}x) + g(\hat{a}(t))(\hat{A}(t)-A(t)),\]
the second equality since $\hat{A}(t)=A(t)$.


Next, if $t \in \mathcal{I}$ then $t \in (x_i,y_i)$ for some $i \in I$. In this case, as $\hat{a}$ is constant on $(x_i,y_i]$,
\[ 
\int_{x_i}^t g(\hat{a}(x)) \hat{A}(\mathrm{d}x)
= g(\hat{a}(t))(\hat{A}(t)-\hat{A}(x_i)) 
= g(\hat{a}(t)) (\hat{A}(t)-A(t))
+
\int_{x_i}^t g(\hat{a}(x)) A(\mathrm{d}x)\, ,
\] 
the final equality since $\hat{A}(x_i)=A(x_i)$. Since $x_i \in [0,1]\setminus \mathcal{I}$, we also have $\int_{0}^{x_i} g(\hat{a}(x)) \hat{A}(\mathrm{d}x) = \int_{0}^{x_i} g(\hat{a}(x)) A(\mathrm{d}x)$, which combined with the preceding display establishes the identity in this case as well.
\end{proof}

\begin{proof}[Proof of Proposition~\ref{prop:r_max}]
First, by Lemma~\ref{lem:flip} applied with  $g(y)=y^{-1}$, for all $t \in [0,1]$ we have 
\[
\int_0^t \frac{v(s)^2}{2a(s)} dx 
= \log 2 \int_0^t \frac{1}{\hat{a}(s)} a(s)ds =(\log 2) \left(t-\frac{\hat{A}(t)-A(t)}{\hat{a}(t)} \right) \le t \log 2\, ,
\]
so $v \in \mathcal{R}$. 

Next, following Mallein~\cite{Mallein2015}, we introduce the family of log-Laplace transforms corresponding to the displacements in $\T_N$. For $t \in (0,1)$, let $Z_t$ be a centred Gaussian with variance $a(t)$; then 
\[
\kappa_t(\theta) = \log \left(2\E{e^{\theta Z_t}}\right) = \log (2 e^{a(t) \theta^2/2}) = \log 2 + \frac{a(t)\theta^2}{2}\, .
\]
Then let 
\[
\kappa_t^*(x) 
= \sup_{\theta \in\R}(\theta x - \kappa_t(\theta))
=  \frac{x^2}{2a(t)}-\log 2\,
\]
be the Fenchel-Legendre dual of $\kappa_t$. Note that for all $t \in [0,1]$, 
\begin{equation}
\label{eq:Ekappa}
E(b,t) = \int_0^t \kappa_s^*(b(s)) ds. 
\end{equation}
Finally, define for $t \in [0,1]$,
\[
\theta_t \coloneqq \frac{d}{dx} \kappa_t^*(x)\Big|_{x=v(t)} = \frac{v(t)}{a(t)} = \left(\frac {2\log 2}{\hat{a}(t)}\right)^{1/2}\, .
\]

Now fix any function $b \in \mathcal{R}$. 
We wish to show that $\int_0^1 v(t) \mathrm{d}t \ge \int_0^1 b(t) \mathrm{d}t$.
Since $\kappa_t^*$ is convex, it follows that
\[
\kappa_t^*(b(t)) \ge \kappa_t^*(v(t)) + \theta_t(b(t)-v(t)). 
\]
Integrating over $t$ now yields that 
\begin{equation}\label{eq:tobound}
\int_0^1 (v(t)-b(t)) dt \ge \int_0^1 \frac{\kappa_t^*(v(t)) - \kappa_t^*(b(t))}{\theta_t}dt
\end{equation}

To bound the right-hand side, first note that 
\begin{align}
\int_0^1 \frac{\kappa_t^*(v(t))}{\theta_t} 
& = \int_0^1 \left(\frac{v(t)}{2} - \frac{\log 2}{\theta_t}\right) \nonumber\\
& = \left(\frac{\log 2}{2}\right)^{1/2} \int_0^1 \frac{1}{\hat{a}(t)^{1/2}}\big(a(t)-\hat{a}(t)\big) dt \nonumber\\
& = 0\, ,
\label{eq:feeltheflip}
\end{align}
the final equality by Lemma~\ref{lem:flip} applied with $g(y)=y^{-1/2}$. 
Next, using \eqref{eq:Ekappa} and integration by parts, we have 
\begin{align*}
\int_0^1 \frac{\kappa_t^*(b(t))}{\theta_t}dt
& 
= \frac{E(b,1)}{\theta_1} + \int_0^1 \frac{E(b,t)}{\theta_t^2} \theta'_t dt\, .
\end{align*}
Since $b \in \mathcal{R}$ we have $E(b,t) \le 0$ for all $t \in[0,1]$. Also, $\hat{a}(t)$ is non-increasing so $\theta_t$ is non-decreasing; it follows that $\int_0^1 \frac{\kappa_t^*(b(t))}{\theta_t}dt \le 0$. 
Combining this bound with (\ref{eq:tobound}) and (\ref{eq:feeltheflip}) yields that 
\[
\int_0^1 (v(t)-b(t))dt  \ge -\int_0^1  \frac{\kappa_t^*(b(t))}{\theta_t}dt \ge 0\, .
\] 
This shows $v$ has the largest integral of any element of $\mathcal{R}$. Finally, the fact that 
\[
\int_0^1 v(t) \mathrm{d}t = (2\log 2)^{1/2} \int_0^1 \hat{a}(t)^{1/2}\mathrm{d}t
\]
is again a consequence of Lemma~\ref{lem:flip} applied with $g(y)=y^{-1/2}$. 
\end{proof}

\bibliography{2018_complexity_CREM,louigi}

\end{document}